\documentclass[letterpaper, 10 pt, conference]{ieeeconf}  

\usepackage{graphicx,psfrag,epsfig,epsf}
\usepackage{subfigure}
\usepackage{amsmath}
\usepackage{amsfonts}
\usepackage{amssymb}
\usepackage{verbatim}
\usepackage{latexsym}
\usepackage{color}
\usepackage{algorithm2e}
\usepackage{tikz}
\usetikzlibrary{automata, calc}
\usetikzlibrary{shapes}
\usepackage{comment}
\usepackage{mathtools}

\usepackage{verbatim}

\usepackage{latexsym}

\newcommand{\Post}{\operatorname{Post}}

\renewcommand{\rm}{\textcolor{red}}

\newcommand{\Ac}{\mathrm{Ac}}
\newcommand{\CoAc}{\mathrm{Coac}}
\newcommand{\Trim}{\mathrm{Trim}}

\newtheorem{proposition}{Proposition}
\newtheorem{assumption}{Assumption}
\newtheorem{theorem}{Theorem}
\newtheorem{corollary}{Corollary}

\newtheorem{definition}{Definition}
\newtheorem{problem}{Problem}
\newtheorem{remark}{Remark}

\begin{document}

	\IEEEoverridecommandlockouts                              
	
	
	
	\overrideIEEEmargins
	
	
	
	
	
	
	
	
	

 \title{\LARGE \bf 
On Incremental Design of Large-Scale Networks of \\
Nondeterministic Metric Finite State Systems 
        }

	\author{Giordano Pola
		\thanks{G. Pola is with the Department of Information Engineering, Computer Science and Mathematics, Center of Excellence DEWS, University of L{'}Aquila, Via Vetoio, 67100 L{'}Aquila, Italy, {\tt\small giordano.pola@univaq.it}.}
   	}
	
	\maketitle
	
	\thispagestyle{empty}
	
	\pagestyle{empty}

	
	\begin{abstract}
In this paper, we address control design of Networks of nondeterministic and metric finite state Systems (NoS) where some systems are plants, others act as controllers, and some are uncontrolled processes. 
The control architecture of NoS extends decentralized control architectures.
The problem addressed consists in designing local controllers for enforcing on the corresponding plants, local specifications  given by regular languages over plants' state alphabet, up to desired accuracies. The approach used to solve the problem is termed \textit{incremental} and consists in finding solutions on an increasing sequence of NoS extracted from the original NoS.
    \end{abstract}
	
	\IEEEoverridecommandlockouts
	
	\thispagestyle{empty}
	
	\pagestyle{empty}
	
	\vspace{5mm}
	\textbf{keywords: } Networks of systems, decentralized control, incremental design.

\section{Introduction} \label{sec1}
In this paper we consider a Network of Systems (NoS) that is given by the interconnection of nondeterministic and metric finite state systems, modeling plants to-be-controlled, controllers to-be-designed and other processes that does not need to be controlled. 
NoS architecture is useful for describing or approximating possibly large-scale interconnected systems featuring continuous and discrete dynamics, and 
are relevant in modeling diverse applications of interest as micro-grids, biological systems, etc. 
The control problem we consider consists in designing local controllers for forcing the corresponding plants to satisfy each, a local specification expressed as a regular language defined over alphabet of plant states, see e.g. \cite{ARC}. Since NoS are possibly large-scale, appropriate methodologies are required rather than centralized architectures to tame complexity inherent to control design. Among them, we recall decentralized control architecture (DecCA) and distributed control architecture (DisCA).  
In DecCA, each plant $P_i$ is assigned to a controller $C_i$ that provides inputs to $P_i$ using information of the state/output of $P_i$. 
In DisCA, each controller $C_i$ gets information of the state/output of $P_i$ and of other controllers $C_j$, and sends inputs to $P_i$. 
In NoS, each controller $C_i$ gets output information from some plants $P_j$, not necessarily including $P_i$, and sends inputs to 
$P_i$. In this way, NoS can be viewed as a generalization of DecCA. Comparison among the three architectures follows.  
From the information pattern point of view, controllers in DisCA and NoS have more information than those in DecCA. DisCA and NoS are not comparable in this regard. 
As a consequence, controllers in DisCA and NoS are able to enforce, generally speaking, larger parts of the specifications than those in DecCA. 
On the other hand, from the Time Computational Complexity (TCC) point of view, DisCA and NoS exhibit higher order TCC than DecCA. TCC of DisCA and NoS is in general comparable because controllers in DecCA embed inside information about the plants they control. 
Another direction for comparison is on whether some communication infrastructures among controllers may be designed or not: some physical constraints may prevent this from happening, and in case it is possible, it may be expensive and not reasonable to the designer. As a consequence, DisCA and NoS exhibit lower expensive communication infrastructure than  DicCA. Finally, while NoS and DecCA may allow cooperative but also non-cooperative control, see e.g. \cite{PolaAUT26} for DecCA, DisCA is targeted to consider only cooperative control. 
Since NoS is much closer to DecCA than DisCA, we provide hereafter a brief overview of existing related results. Decentralized supervisory control of DES, see e.g. \cite{Rudie92}, has been extensively studied, which however, considers specifications given in terms of regular languages of events while here we consider regular languages of states. Hence, DES framework cannot be directly used for the control design of NoS. Work related to the present paper is reported in \cite{DallalCDC15,KimCDC15,Meyer15,PolaTAC17}, which use discrete abstractions to design decentralized controllers for enforcing persistency, safety, Linear Temporal Logic and regular language specifications, respectively. Papers closer to the framework of NoS are \cite{PolaAUT26} and \cite{PolaIEEECSL24} that address decentralized control of networks of finite state systems with regular language-based specification and assuming exact knowledge of plant states, whereas in NoS information about plant states is given through their outputs. In this paper  we propose an approach to the design of NoS that we term incremental and consists in finding solutions on an increasing sequence of NoS extracted from the original NoS. As steps in the procedure increase, the controllers designed may enforce larger parts of the specifications than those obtained in the previous steps. This procedure terminates when: (i) specifications are fully enforced, or (ii) the obtained sub-NoS coincides with the original NoS, or (iii) there are no resources available to support computations needed to solve the control problem. Our approach may allow providing solutions to the control problem even in  case (iii) and allows the use of parallel computing architectures which is important from the scalability point of view. \\
This paper is organized as follows. Section \ref{sec2} introduces  notation and preliminary definitions. 
Section \ref{sec3} introduces NoS with one plant and one controller and control problem formulation. Section \ref{sec4} solves the control problem for a special NoS, Section \ref{sec5} discusses the incremental approach, Section \ref{sec6} extends preceding results to NoS with multiple plants, controllers and specifications,  Section \ref{sec7} discusses TCC analysis, Section \ref{sec8} presents an example, and Section \ref{sec9} gives concluding remarks.

\section{Notation and preliminary definitions} \label{sec2}
\subsection{Notation}
Symbol $\wedge$ denotes the logical conjunction and symbol $\varnothing$, the empty set. Given a finite set $X$, $\vert X \vert$ denotes the cardinality of $X$, and $2^X$  the power set of $X$, that is the collection of all subsets of $X$. 
Given two sets $X$ and $Y$, the symbol $X\,\backslash \,Y$ denotes the difference set, i.e. $X\,\backslash \,Y=\{x\in X\,|\,x \notin Y\}$. 
Function $f:X\rightarrow Y$ is empty if $X=\varnothing$, and its inverse map is $f^{-1}:Y\rightarrow 2^X$ defined by $f^{-1}(y)=
\{x\in X \,|\, f(x)=y\}$, $y\in Y$. 
Symbols $\mathbb{N}$, $\mathbb{R}$, $\mathbb{R}^{+}$ and $\mathbb{R}_{0}^{+}$ denote the set of non-negative integer, real, positive real, and non-negative real numbers, respectively. Given $a,b\in\mathbb{N}$ we set $[a;b]=\{x\in\mathbb{N}\,|\, a\leq x \leq b\}$.
\begin{definition}
A directed graph $\mathcal{D}$ is a pair $(\mathcal{V},\mathcal{E})$ where $\mathcal{V}$ is the finite set of vertices and $\mathcal{E}\subseteq \mathcal{V} \times \mathcal{V}$ is the set of directed edges. 
A directed path of $\mathcal{D}$ with length $l \in \mathbb{N}$ is a sequence $v_1\,v_2\,...\,v_l$ of vertices $v_i\in \mathcal{V}$, $i\in [1;N]$, satisfying $(v_i,v_{i+1})\in \mathcal{E}$ for $i \in [1;l-1]$.     
\end{definition}

\subsection{Transition systems and 
regular languages}\label{sec:ApproxEquiv}
We recall from e.g. \cite{Cassandras} some notions on language theory. Let $Y$ be a finite set representing the alphabet. A word over $Y$ is a finite sequence 
of symbols in $Y$; its length is the number of its entries. 
The empty word is denoted by $\varepsilon$. 
The symbol $Y^\ast$ denotes the Kleene closure of $Y$, that is the collection of all words over $Y$ including $\varepsilon$. 
A language $L$ over $Y$ is a subset of $Y^\ast$. 
We now recall the following
\begin{definition}
\label{systems}
A transition system is a tuple 
\begin{equation}
\label{TransSys}
 T=(X,X_0,U,\xrightarrow[\text{ }\text{ }\text{ }\text{ }]{\text{ }},X_m,Y,H),   
\end{equation}
 consisting of a set of states $X$, a set of initial states $X_0 \subseteq X$, a set of inputs $U$, a transition relation $\xrightarrow[\text{ }\text{ }\text{ }\text{ }]{\text{ }} \subseteq X\times U\times X$, a set of marked states $X_m \subseteq X$, a set of outputs $Y$ and an output function $H:X\rightarrow Y$.
\end{definition}
A transition $(x,u,x^{\prime})\in
\xrightarrow[\text{ }\text{ }\text{ }\text{ }]{\text{ }}$ of $T$ is denoted by $x
\xrightarrow[\text{ }\text{ }\text{ }\text{ }]{u}x^{\prime}$.  
The evolution of transition systems is captured by the notions of state, input and output runs. Given a sequence of transitions of $T$
\begin{equation}
\label{seqtrans}
x(0) 
\xrightarrow[\text{ }\text{ }\text{ }\text{ }]{u(0)}x(1)
\xrightarrow[\text{ }\text{ }\text{ }\text{ }]{u(1)} \,{...}\,x(l-1) 
\xrightarrow[\text{ }\text{ }\text{ }\text{ }]{u(l-1)}
\end{equation}
with $x(0) \in X_0$, the sequences 
$r_X: \, x(0) \, x(1) \, ... \, x(l)$, 
$r_U: \, u(0) \, u(1) \, ... \, u(l-1)$ and $r_Y: H(x(0)) \, H(x(1)) \, ... \, H(x(l))$  
are called a \textit{state run}, an \textit{input run} and an \textit{output run} of $T$, respectively.
Length of $r_X$ and $r_Y$ is $l+1$ while length of $r_U$ is $l$. 
Transition system $T$ in (\ref{TransSys}) is \textit{empty} if $X_0=\varnothing$, \textit{finite} if $X$ and $U$ are finite sets, \textit{metric} if $X$ is equipped with a metric $\mathbf{d}:X\times X\rightarrow\mathbb{R}_{0}^{+}$, \textit{deterministic} if for any $x\in X$ and $u\in U$ there exists at most one transition $x 
\xrightarrow[\text{ }\text{ }\text{ }\text{ }]{u} x^+$ and \textit{nondeterministic}, otherwise. 
\begin{definition}
   The \textit{language} of $T$, denoted $\mathcal{L}(T)$, is the collection of all its output runs. 
The \textit{marked language} of $T$, denoted $\mathcal{L}_m(T)$, is the collection of all output runs $r_Y$ such that the corresponding transitions sequence in (\ref{seqtrans}) ends in a state $x_l\in X_m$. 
\end{definition}
\begin{definition}
 A language $L$ over a finite set $Y$ is \textit{regular} if there exists a finite transition system $T$ with input set $Y$ such that $L=\mathcal{L}_m (T)$.   
\end{definition}
We now recall some unary operations on transition systems, see e.g. \cite{Cassandras}. 
A transition system $T'=(X',X'_0,U',
\xrightarrow[\text{ }\text{ }'\text{ }\text{ }]{\text{}},X'_m,Y',H')$ is a \textit{sub-transition system} of $T=(X,X_0,U,\xrightarrow[\text{ }\text{ }\text{ }\text{ }]{\text{}},X_m,Y,H)$, denoted $T' \sqsubseteq T$, if $X'\subseteq X$, $X'_0\subseteq X_0$, $U'\subseteq U$, $
\xrightarrow[\text{ }\text{ }'\text{ }\text{ }]{\text{}} \subseteq \xrightarrow[\text{ }\text{ }\text{ }\text{ }]{\text{}}$, $X'_m \subseteq X_m$, $Y' \subseteq Y$ and $H'(x)=H(x)$ for all $x\in X'$. 
\begin{definition}
The accessible part of $T$ is the set of all states that are reachable from an initial state by following some sequence of input
symbols.
\end{definition} 
\begin{definition}
Given $T$, $u\in U$ and $x'\in X$ define:
\[
\begin{array}{l}
\Post_u(x')=\{x\in X\,|\, x'
\xrightarrow[\text{ }\text{ }\text{ }\text{ }]{u}
 x\};\\
\Post^{-1}_u(x')=\{x\in X\,|\, x
\xrightarrow[\text{ }\text{ }\text{ }\text{ }]{u}
x'\}.
\end{array}
\]
Given $T$, $u\in U$ and $X'\subseteq X$ define:
\[
\begin{array}{l}
 \Post_u(X')=\{x\in X\,|\,\exists x'\in X' \text{ s.t. } x'\xrightarrow[\text{ }\text{ }\text{ }\text{ }]{u} x\};\\
 \Post^{-1}_u(X')=\{x\in X\,|\,\exists x'\in X' \text{ s.t. } x
 \xrightarrow[\text{ }\text{ }\text{ }\text{ }]{u}
  x'\}.     
\end{array}
\]    
\end{definition}
Define the following recursive equations:
	\[
	\begin{array}
		{l}
		X(0):=X_m;\\
		X(k+1):=X(k) \bigcup_{u\in U}
		\left\{
		\begin{array}
			{l}
			x\in \Post^{-1}_u(X(k))\,|\\
			\,\Post_u(x)\subseteq X(k)
		\end{array}
		\right\}.
	\end{array}
	\]
 Sequence above admits a fixed point, i.e.  
 there exists $k'\in \mathbb{N}$ such that $X(k')=X(k'+1)$.
The co-accessible part $\CoAc(T)$ of $T$ is the sub-transition system of $T$ induced by $X(k')$, i.e. 
defined by the tuple 
\[
(X(k'),X_0 \cap X(k'),U,
\xrightarrow[\text{ }\text{ }'\text{ }\text{ }]{\text{}},X_m \cap X(k'),Y,H'),		
\]
 where $H'(x)=H(x)$ for all $x\in X'$, and
$x 
\xrightarrow[\text{ }\text{ }'\text{ }\text{ }]{u}
 x'$ if $x 
 \xrightarrow[\text{ }\text{ }\text{ }\text{ }]{u} x'$. When $T=\CoAc(T)$, it is said co-accessible. 
	By e.g. Example 2 of  \cite{CSL21Masciulli}, $\Ac$ and $\CoAc$ do not commute. 
	The trim of $T$, denoted $\Trim(T)$, is defined as $\Trim(T)=\Ac(\CoAc(T))$. 
	By definition, $\Trim(T)$, if not empty, is accessible and co-accessible. $T$ is trim if $T=\Trim(T)$.

\section{Networks of Systems and Control Problem} \label{sec3}
In this paper we consider a Network of Systems (NoS)  $\mathcal{N}=(\mathcal{V},\mathcal{E},h)$, 
where: $\mathcal{V}=\{\Sigma_1,\Sigma_2,...,\Sigma_N\}$ is the set of vertices, and $\Sigma_i$ are finite state systems; 
$\mathcal{E} \subseteq (\mathcal{V} \times \mathcal{V})\backslash \{(\Sigma_i,\Sigma_j)\in \mathcal{V} \times \mathcal{V}\,|\, i=j\}$ is the set of edges, describing interaction among systems; $h$ is the connection function, modeling information shared among systems. 
Given NoS $\mathcal{N}$, the pair $(\mathcal{V},\mathcal{E})$ can be viewed as a directed graph.
Finite state system $\Sigma_i$, $i\in [1;N]$, in $\mathcal{N}$, is described by the difference inclusion: 
\begin{equation}
\label{ith-plant}
\Sigma_i : 
\left\{
\begin{array}
{l}
x_i(t+1)\in f_i(x_i(t),u_i(t)),\\
x_i(0) \in X_{i,0},
x_i(t) \in X_i,
u_i(t) \in U_i, 
t\in\mathbb{N},
\end{array}
\right.
\end{equation}
where $x_i(t)$ and $u_i(t)$ are the state and the input at step $t\in\mathbb{N}$, respectively; $X_i$, 
$X_{i,0}\subseteq X_i$ and $U_i$ are the finite sets of states, initial states and inputs, respectively. Function $f_i : X_i \times U_i \rightarrow 2^{X_i}$ is the state transition function that is assumed to be total. A state trajectory of $\Sigma_i$, when considered as isolated from other systems $\Sigma_j$ in the NoS, is a finite sequence of states $x_i(0)\,x_i(1)\,...$ satisfying (\ref{ith-plant}) for some finite sequence of inputs $u_i(0)\,u_i(1)\,...$ . 
It is easy to see that systems $\Sigma_i$ are nondeterministic, in general. Interaction among systems $\Sigma_i$ in  $\mathcal{N}$ is formalized through connection function $h$ and inputs $u_i$. 
Function $h$ is specified for any $(\Sigma_j,\Sigma_i)\in \mathcal{E}$, 
by the collection of functions $h_{j,i} : X_j \rightarrow U_{j,i}$, for some finite sets $U_{j,i}$. Functions $h_{j,i}$ can be viewed as output functions associated with $\Sigma_j$. Input $u_i(.)$ of $\Sigma_i$, $i\in [1;N]$, is defined through connection function $h_{j,i}$,
for any $(\Sigma_j,\Sigma_i)\in \mathcal{E}$, 
by:
\begin{equation}
\label{ui}
\begin{array}
{l}
u_i(.)=(h_{1,i}(x_1(.)),...,h_{i-1,i}(x_{i-1}(.)),\\ \quad \quad \quad h_{i+1,i}(x_{i+1}(.)),..., h_{N,i}(x_N(.))),i \in [1;N], \\
U_i=U_{1,i} \times ... \times U_{i-1,i} \times U_{i+1,i} \times ... \times U_{N,i},
\end{array}
\end{equation}
for some collection of trajectories $x_i(.)$ of $\Sigma_i$, $i \in [1;N]$.
We denote by 
$\Sigma_{[1;N]}$, the finite state system resulting from the interconnection of $\Sigma_i$, which is defined by 
\begin{equation}
\label{sigmaenne}
\Sigma_{[1;N]} : 
\left\{
\begin{array}
{l}
\mathbf{x}(t+1)\in f(\mathbf{x}(t)),\\
\mathbf{x}(0) \in \mathbf{X}_0,
\mathbf{x}(t) \in \mathbf{X},
t\in\mathbb{N},\\
\end{array}
\right.
\end{equation}
where 
$\mathbf{x}(0):=(x_1(0),...,x_N(0))\in\mathbf{X}_0:=X_{1,0} \times ... \times X_{N,0}$, $\mathbf{x}(t):=(x_1(t),...,x_N(t))\in\mathbf{X}:=X_1 \times ... \times X_N$, 
$f:\mathbf{X}\rightarrow 2^{X_1} \times ... \times 2^{X_N}$, 
$f(\mathbf{x}(t)):=(f_1(x_1(t),u_1(t)),...,f_N(x_N(t),u_N(t)))$, 
and $u_i(t)$ are as in (\ref{ui}), $t \in \mathbb{N}$. A state trajectory $\mathbf{x}(.)$ of $\Sigma_{[1;N]}$ is a finite sequence of states $\mathbf{x}(0)\,\mathbf{x}(1)\,...\,\mathbf{x}(l)$ satisfying (\ref{sigmaenne}); length of $\mathbf{x}(.)$ is $l$. Among all systems in $\mathcal{V}$ we identify two systems: $\Sigma_1$ is the plant to-be-controlled, called $P$; $\Sigma_2$ is the controller to-be-designed for $P$, called $C$. 
From definition of $\Sigma_2$, controller $C$ is dynamic with output feedback from other systems $\Sigma_i$ in the NoS. 
At the beginning, $C$ and $h_{2,1}$ are unknown and are designed in the sequel to solve some control problems. We suppose that $X_1$ is metric with some metric $\mathbf{d}: X_1 \times X_1 \rightarrow \mathbb{R}^+_0 $. 
For later use, we give the following
\begin{definition}
\label{defsubNos}
A sub-NoS of NoS $\mathcal{N}=(\mathcal{V},\mathcal{E},h)$
is a NoS 
$\mathcal{N'}=(\mathcal{V}',\mathcal{E}',h')$ where 
$\mathcal{V}' \subseteq \mathcal{V}$, $\mathcal{E}'=\mathcal{E} \cap (\mathcal{V}' \times \mathcal{V}')$ and $h'$ is given by the collection of functions $h_{j,i}$ with $(\Sigma_j,\Sigma_i)\in \mathcal{E}'$.
\end{definition}
\begin{remark}
\label{RemSubNos} Given sub-NoS $\mathcal{N}'=(\mathcal{V}',\mathcal{E}',h')$ of $\mathcal{N}$, components of inputs $u_i(.)$ of $\Sigma_i$ with $\Sigma_i\in \mathcal{V}'$ can be split into internal and external to $\mathcal{N'}$. More precisely, given 
$u_i(.)=(u_{1,i}(.),...,u_{i-1,i}(.),u_{i+1,i}(.),..., u_{N,i}(.)),i \in [1;N]$, inputs $u_{j,i}(.)=h_{j,i}(x_j(.))$ with $\Sigma_j\in \mathcal{V}'$ are internal, while inputs $u_{j,i}:\mathbb{N}\rightarrow U_{j,i}$ with $\Sigma_j\in \mathcal{V}\,\backslash\,\mathcal{V}'$, are external. We assume that external inputs are not measurable.
\end{remark}

We can now introduce the control problem that we address: 

\begin{problem}
\label{problem1}
Consider a NoS $\mathcal{N}=(\mathcal{V},\mathcal{E},h)$, a specification described by a regular language $Q$ over the alphabet $X_1$ (set of states of $P$), and an accuracy $\theta\in\mathbb{R}^+_0$. Find a quadruplet
\begin{equation}
\label{quadruplet}
(C,X_{2,f},h_{2,1},\mathcal{R}_0)
\end{equation}
where $C$ is the controller, $X_{2,f}\subseteq X_2$, is a set of final states of $C$, $h_{2,1}: X_2 \rightarrow U_{2,1}$, is a connection function from $C$ to $P$, and $\mathcal{R}\subseteq X_{1,0} \times X_{2,0}$, is a relation  between initial states of $P$ and $C$,
such that 
for any state trajectory $\mathbf{x}(.)=(x_1(.),x_2(.),...,x_N(.))$ of $\Sigma_{[1;N]}$ with initial state 
\begin{equation}
\label{relinit}
\mathbf{x}(0)\in \mathcal{R}_0:=\mathcal{R} \times X_{3,0} \times ... \times X_{N,0},
\end{equation}
and final state of the controller $x_2(l)\in X_{2,f}$, where $l+1$ is the length of $\mathbf{x}(.)$, there exists a word $q_0\,q_1\,...\,q_l\in Q$ with length $l+1$, such that the following inequality holds:
\begin{equation}
    \label{ineq}
    \mathbf{d}(x_1(t),q_t)\leq \theta,\forall t\in[0;l].
\end{equation}
\end{problem}
For later purposes, we need to introduce the following
\begin{definition}
\label{defPartSpec}
    The part of the specification $Q$ implemented by $\Sigma_{[1;N]}$ with controller $C$, final states of controller in $X_{2,f}$, connection function $h_{2,1}$ and relation of initial states $\mathcal{R}_0$, denoted by $\mathcal{Q}(\Sigma_{[1;N]},C,X_{2,f},h_{2,1},\mathcal{R}_0)$,  is the collection of words $q_0\,q_1\,...\,q_l\in Q$ for which there exists a state trajectory $\mathbf{x}(.)=(x_1(.),x_2(.),...,x_N(.))$ of $\Sigma_{[1;N]}$ with initial state satisfying  (\ref{relinit}), length $l+1$, and final state of the controller $x_2(l)\in X_{2,f}$ such that (\ref{ineq}) holds. 
\end{definition}
In this paper we make the following:
\begin{assumption} 
\label{ass0}
Given NoS $\mathcal{N}$: pair $(C,\Sigma_i)\in\mathcal{E}$ if and only if $\Sigma_i=P$; for any $\Sigma_i\in \mathcal{V}$ there exists a directed path  from $\Sigma_i$ to either $C$ or $P$. 
\end{assumption}
Problem \ref{problem1} addresses control design of NoS $\mathcal{N}$ where only one plant, one controller and one specification are considered. The solution to this problem is given in Sections \ref{sec4} and \ref{sec5}. Extension to the case of NoS $\mathcal{N}$ with multiple plants, controllers and specifications is discussed in Section \ref{sec6}.
\section{Control design of a special NoS}\label{sec4}
In this section we start by deriving results for solving Problem \ref{problem1} for a special NoS described by $\mathcal{N}'=(\mathcal{V}',\mathcal{E}',h')$, 
where: $\mathcal{V}'=(\Sigma'_1,\Sigma'_2,\Sigma'_3)$; $\mathcal{E}'=
(\mathcal{V}'\times \mathcal{V}')\,
\backslash \{(\Sigma'_i,\Sigma'_j)\in \mathcal{V}' \times \mathcal{V}'\,|\, i=j\} \backslash \, \{(\Sigma'_2,\Sigma'_3)\}$; connection function $h'$ is  specified later on. Dynamics associated to $\Sigma'_j$, $j=1,2,3$, are of the form of $\Sigma_i$ in (\ref{ith-plant}) and specified by
\begin{equation}
\label{jth-plant}
\Sigma'_j : 
\left\{
\begin{array}
{l}
x'_j(t+1)\in f'_j(x'_j(t),u'_j(t)),\\
x'_j(0) \in X'_{j,0},
x'_j(t) \in X'_j,
u'_j(t) \in U'_j,t\in\mathbb{N},
\end{array}
\right.
\end{equation}
where $x'_j(t)$ and $u'_j(t)$ are the state and the input at step $t\in\mathbb{N}$, respectively; $X'_j$, $X'_{j,0}\subseteq X'_{j}$ and $U'_j$ are the finite sets of states, of initial states and of inputs, respectively. Function $f'_j : X'_j \times U'_j \rightarrow 2^{X'_j}$ is the state transition function and assumed to be total. Sets $U'_j$ are given by $U'_1=U'_{2,1} \times U'_{3,1} \times W_1$, 
$U'_2=U'_{1,2} \times U'_{3,2}$, 
$U'_3=U'_{1,3}\times W_3$, 
for some finite sets $U'_{i,j}$ and $W_i$, and connection function $h'$ is specified by $h'_{j,i} : X'_j \rightarrow U'_i$ for $(\Sigma'_j,\Sigma'_i)\in \mathcal{E}'$.
Inputs $u'_j(.)$ are given by 
\begin{equation}
\label{uprimi}
\begin{array}
{l}
u'_1(.)=(h'_{2,1}(x'_2(.)),h'_{3,1}(x'_3(.)),w_1(.)),\\
u'_2(.)=(h'_{1,2}(x'_1(.)),h'_{3,2}(x'_3(.)))\\
u'_3(.)=(h'_{1,3}(x'_1(.)),w_3(.)),\\
\end{array}
\end{equation}
where for $j=1,3$, inputs $w_j:\mathbb{N} \rightarrow W_j$ are external to $\Sigma'_i$.  
We denote by $\Sigma(\mathcal{N}')$ the system obtained by coupling systems $\Sigma'_j$ in (\ref{jth-plant}) through inputs $u'_j(.)$ in (\ref{uprimi}).
We interpret $\Sigma'_1$ as the plant, called $P'$, and $\Sigma'_2$ as the controller, called $C'$. We suppose that $X'_1$ is metric with metric $\mathbf{d}':X'_1 \times X'_1 \rightarrow \mathbb{R}^+_0$. At the beginning, controller $C'$ and connection function $h'_{2,1}(.)$ (from $C'$ to $P'$) are unknown and are designed later on to solve Problem \ref{problem1}.
By using the results established in \cite{PolaTAC17}, it is possible to associate a transition system $T_{Q'}$ to specification $Q'$, called dual transition system, specified by
\begin{equation}
\label{dualTS}
T_{Q'}=(X_{q},X_{q,0},U_{q},
\xrightarrow[\text{ }\text{ }q\text{ }\text{ }]{\text{}},X_{q,m},Y_q,H_{q})
\end{equation}
with $Y_q=X'_1$, and satisfying the following properties:
$T_{Q'}$ is finite, accessible, co-accessible and metric with metric $\mathbf{d}'$;
the marked output language $\mathcal{L}_m^y(T_{Q'})$ of $T_{Q'}$ coincides with $Q'$.
\incmargin{1em}
\restylealgo{boxed}\linesnumbered
\begin{algorithm*}[t]
\label{alg1}
\SetLine
\textbf{input}\;
NoS $\mathcal{N}'$; 
specification $Q'$;
accuracy $\theta'$\;
\textbf{init}\;
$X^c=X^c_0:=\{(x'_1,x'_3,x_q)\in X'_{1,0} \times X'_{3,0}  \times X_{q,0} \,|\, \mathbf{d}'(x'_1,H_q(x_q))\leq \theta'\}$; 
$U^c:=U'_2$; $
\xrightarrow[\text{ }\text{ }c\text{ }\text{ }]{}:=\varnothing$; $X^c_m:=\{(x'_1,x'_3,x_q)\in X^c \,|\, x_q\in X_{q,m}\}$; $Y^c:=Y_q$;
$H^c((x'_1,x'_3,x_q)):=H_q(x_q),$ $\forall (x'_1,x'_3,x_q) \in X^c$;
$T^c:=(X^c,X^c_0,U^c,
\xrightarrow[\text{ }\text{ }c\text{ }\text{ }]{\text{}},X^c_m,Y^c,H^c)$;
$h:X'_2\rightarrow U'_{2,1}$ empty function;
$\mathcal{R}':=\varnothing$;
$U'_{2,not}:=\varnothing$\; 
\ForEach {$u'_2=(u'_{1,2},u'_{3,2})\in U'_2$}{
\ForEach {$u'_{2,1}\in U'_{2,1}$ \text{ s.t. }
\begin{equation}
    \label{eqAlg1}
\begin{array}{c}
\forall (x'_1,x'_3,x_q) \in X^c \cap 
(((h'_{1,2})^{-1}( u'_{1,2})) 
\times ((h'_{3,2})^{-1}(u'_{3,2})) \times X_q ),
\forall (w_1,w_3) \in W_1 \times W_3,\\ 
\forall x^+_1\in f'_1(x'_1,(u'_{2,1},h'_{3,1}(x'_3)),w_1),
\forall x^+_3\in f'_3(x'_3,h'_{1,3}(x'_1),w_3),
\exists \,\, x_q 
\xrightarrow[\text{ }\text{ }q\text{ }\text{ }]{\text{}}
x_q^+ \text{ s.t. } \mathbf{d}'(x^+_1,H_q(x_q^+))\leq \theta'
 \end{array}  
\end{equation}
}
{
$X^c:=X^c \cup \{(x^+_1,x^+_3,x_q^+)\}$; 
$
\xrightarrow[\text{ }\text{ }c\text{ }\text{ }]{\text{}}
:=
\xrightarrow[\text{ }\text{ }c\text{ }\text{ }]{\text{}}
\cup \{((x'_1,x'_3,x_q),(h'_{1,2}(x^+_1),h'_{3,2}(x^+_3)),(x^+_1,x^+_3,x_q^+))\}$;
$H^c((x^+_1,x^+_3,x_q^+)):=H_q(x_q^+)$;
$h((x'_1,x'_3,x_q)):=u'_{2,1}$;
}
\If{ 
$\nexists u'_{2,1}\in U'_{2,1}$ \text{ s.t. (\ref{eqAlg1}) holds}
}
{$U'_{2,not}:=U'_{2,not} \cup \{u'_2\}$\;}
}
$T^c:=\Trim(T^c)$\;
\textbf{output} $(C',X'_{2,f},h'_{2,1},\mathcal{R}',T^c,U'_{2,not})$
 specified by \\
controller $C'$ defined by $X'_2:=X^c$; 
$X'_{2,0}:=X^c_0$; 
$f'_2((x'_1,x'_3,x_q),(u^+_{1,2},u^+_{3,2})):=\left\{
(x^+_1,x^+_3,x_q^+)\in X^c \,|\, (x'_1,x'_3,x_q) 
\xrightarrow[\text{ }\text{ }c\text{ }\text{ }]{(u^+_{1,2},u^+_{3,2})}
(x^+_1,x^+_3,x_q^+)
\right\}$\; 
$X'_{2,f}:=X^c_m$; 
$h'_{2,1}((x'_1,x'_3,x_q)):=h((x'_1,x'_3,x_q)), \forall (x'_1,x'_3,x_q)\in X'_2$\;
relation $\mathcal{R}'$ defined by $\mathcal{R}':=\{(x'_1,(x''_1,x''_3,x_q))\in X'_1\times X'_2\,|\,x'_1=x''_1 \}$.
\caption{Control design of NoS $\mathcal{N}'$.}
\end{algorithm*}
In the sequel, we denote a state of $X_q$ by $x_q$ and a transition of $T_{Q'}$ by $x_q 
\xrightarrow[\text{ }\text{ }q\text{ }\text{ }]{\text{}} x_q^+$, where label is skipped because it plays no role in the further developments. 
Solution to Problem \ref{problem1} is based on Algorithm \ref{alg1}. It is readily seen that 
Algorithm \ref{alg1} terminates in a finite number of steps. Moreover:
\begin{theorem}
\label{ThSimplified1}
Output of Algorithm \ref{alg1} solves Problem \ref{problem1}.
\end{theorem}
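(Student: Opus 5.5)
The plan is an induction on the length of state trajectories of $\Sigma(\mathcal{N}')$, closed in the controller by $C'$, $h'_{2,1}$ and initial relation $\mathcal{R}'$. The invariant to carry along is that the controller state shadows the plant state and the specification together.

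\emph{Invariant.} For every trajectory $(x'_1(.),x'_2(.),x'_3(.))$ with initial state in $\mathcal{R}'\times X'_{3,0}$, and every $t\in[0;l]$, the controller state is
\[
x'_2(t)=(x'_1(t),x'_3(t),x_q(t))\in X^c
\]
for some run $x_q(0)\,x_q(1)\,...\,x_q(t)$ of $T_{Q'}$ with $x_q(0)\in X_{q,0}$. Moreover $\mathbf{d}'(x'_1(s),H_q(x_q(s)))\leq\theta'$ for all $s\leq t$.

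\emph{Base case.} By the definition of $\mathcal{R}'$, an initial pair $(x'_1(0),x'_2(0))$ satisfies $x'_2(0)=(x''_1,x''_3,x_q)\in X^c_0$ with $x''_1=x'_1(0)$. Membership in $X^c_0$ gives $x_q\in X_{q,0}$ and $\mathbf{d}'(x'_1(0),H_q(x_q))\leq\theta'$.

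The $x'_3$-component is the delicate point in the base case. $\mathcal{R}'$ only ties the first component, and $x'_3(0)$ ranges freely over $X'_{3,0}$. Two readings are possible. One is to read $\mathcal{R}'$ together with $X'_{3,0}$ so that the $x'_3$-component of $x'_2(0)$ must equal $x'_3(0)$. The other is to observe that, at the first step, $C'$ receives $h'_{3,2}(x'_3)$ and can only fire transitions from states consistent with the measured output. If needed, I will strengthen the invariant to "consistent with the outputs $h'_{1,2},h'_{3,2}$" rather than exact equality.

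\emph{Inductive step.} Suppose the invariant holds at time $t$. The controller is at $(x'_1,x'_3,x_q)$ and emits $u'_{2,1}=h((x'_1,x'_3,x_q))$. This value was assigned in the inner loop only when (\ref{eqAlg1}) held for $u'_2=(h'_{1,2}(x'_1),h'_{3,2}(x'_3))$.

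The plant then moves to some $x^+_1\in f'_1(x'_1,(u'_{2,1},h'_{3,1}(x'_3)),w_1)$, and $\Sigma'_3$ moves to some $x^+_3\in f'_3(x'_3,h'_{1,3}(x'_1),w_3)$, for arbitrary unmeasured $w_1,w_3$. Condition (\ref{eqAlg1}) quantifies universally over exactly these successors and disturbances. Hence there is an $x_q^+$ with $x_q\xrightarrow[\text{ }\text{ }q\text{ }\text{ }]{}x_q^+$ and $\mathbf{d}'(x^+_1,H_q(x_q^+))\leq\theta'$.

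The algorithm added $(x^+_1,x^+_3,x_q^+)$ to $X^c$, together with a transition labelled $(h'_{1,2}(x^+_1),h'_{3,2}(x^+_3))$. The controller's input at time $t+1$ is precisely this measured pair, so $f'_2$ permits $x'_2(t+1)=(x^+_1,x^+_3,x_q^+)$.

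I expect this step to be the main obstacle. $f'_2$ may also allow transitions to other triples carrying the same output label, and these would break exact tracking. To handle this I will use the fact that every triple reached under that label still satisfies the distance bound and is a genuine $T_{Q'}$-successor. The distance requirement then holds regardless of which triple the controller picks, so only the $x_q$-run is needed in the conclusion.

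\emph{Closing step.} Suppose the final controller state lies in $X'_{2,f}=X^c_m$. Then $x_q(l)\in X_{q,m}$, so $x_q(0)\,x_q(1)\,...\,x_q(l)$ is a marked run of $T_{Q'}$. Since $\mathcal{L}^y_m(T_{Q'})=Q'$, its output $q_t=H_q(x_q(t))$ gives a word in $Q'$ of length $l+1$ satisfying (\ref{ineq}).

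Finally, the trimming $T^c:=\Trim(T^c)$ only deletes states and transitions. It therefore preserves soundness: trajectories of the trimmed controller are trajectories of the untrimmed one, possibly blocked earlier. So the guarantee is unaffected, and the output quadruplet solves Problem \ref{problem1}.
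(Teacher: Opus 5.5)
Your skeleton (a tracking invariant, base case from $X^c_0$, inductive step from (\ref{eqAlg1}), closing via $X^c_m$ and $\mathcal{L}^y_m(T_{Q'})=Q'$) is the paper's Step~$\#0$/Step~$\#1$ iteration. The gap is the fallback you propose for the nondeterminism of $f'_2$.

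The bound $\mathbf{d}'(x^+_1,H_q(x^+_q))\leq\theta'$ that Algorithm~\ref{alg1} attaches to an added triple $(x^+_1,x^+_3,x^+_q)$ concerns the \emph{first component of that triple}, not the actual plant state. Suppose that, on the observed label $(h'_{1,2}(x'_1(t+1)),h'_{3,2}(x'_3(t+1)))$, the controller moves to a triple $(\tilde x^+_1,\tilde x^+_3,\tilde x^+_q)$ with $\tilde x^+_1\neq x'_1(t+1)$. This is possible whenever $h'_{1,2}$ does not separate the successors. Then you know $\mathbf{d}'(\tilde x^+_1,H_q(\tilde x^+_q))\leq\theta'$, but nothing about $\mathbf{d}'(x'_1(t+1),H_q(\tilde x^+_q))$, which is what (\ref{ineq}) requires.

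Moreover, once $(\tilde x^+_1,\tilde x^+_3)\neq(x'_1(t+1),x'_3(t+1))$, the next inductive step breaks as well. Condition (\ref{eqAlg1}) only certifies successors of triples that belong to $X^c$. The triple that matters, $(x'_1(t+1),x'_3(t+1),\tilde x^+_q)$, need not belong to $X^c$. So ``only the $x_q$-run is needed'' is false: the $x_q$-run is useful only while it stays paired with the true $(x'_1(t),x'_3(t))$. The same objection applies to your second reading of the base case. Replacing equality by consistency with the outputs weakens (not strengthens) the invariant and loses exactly this pairing.

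Your remark is correct for ambiguity in the $x_q$-component alone, including transitions created for admissible values of $u'_{2,1}$ other than the one stored in $h$. Any controller successor of the form $(x'_1(t+1),x'_3(t+1),\tilde x^+_q)$ carries the bound for the true state, and $\tilde x^+_q$ is a $T_{Q'}$-successor of $x_q(t)$.

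So keep the exact invariant $x'_2(t)=(x'_1(t),x'_3(t),x_q(t))$ and state what preserves it:
\begin{itemize}
\item at $t=0$, the controller's $x'_3$-component must equal $x'_3(0)$. The literal $\mathcal{R}'\times X'_{3,0}$ does not give this; the paper simply asserts it in Step~$\#0$.
\item at every step, the label must determine $(x^+_1,x^+_3)$ among the successors of the current triple (e.g.\ $h'_{1,2}$ and $h'_{3,2}$ injective).
\end{itemize}
Alternatively, restrict the claim to controller runs that track. This is what the paper tacitly does by repeating Step~$\#1$ from the matching successor triple, never confronting the issue you raised.

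Finally, make your timing convention explicit. You let the update $x'_2(t)\to x'_2(t+1)$ read the measurement at time $t+1$. That matches the labels $(h'_{1,2}(x^+_1),h'_{3,2}(x^+_3))$ produced by Algorithm~\ref{alg1}. It does not match the literal interconnection $x'_2(t+1)\in f'_2(x'_2(t),u'_2(t))$, which the paper uses in (\ref{e3}).
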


\begin{proof}
Consider any state trajectory $(x'_1(.),x'_2(.),x'_3(.))$ of $\Sigma(\mathcal{N}')$ with initial state $(x'_1(0),x'_2(0),x'_3(0))\in \mathcal{R}'$. \\
(Step $\#0$) By definition of $X'_2$ (lines 15, 13 and 4) and of $\mathcal{R}'$ (line 17), state 
$x'_2(0)=(x'_1,x'_3,x_q)$, with $x'_1=x'_1(0)$, $x'_3=x'_3(0)$ and $x_q\in X_q$, satisfies
\begin{equation}
\label{e0}
\mathbf{d}'(x'_1(0),H_q(x_q))=\mathbf{d}'(x'_1,H_q(x_q)) \leq \theta',
\end{equation}
which, by setting $q'_0=H_q(x_q)$, implies that the inequality in (\ref{ineq}) is satisfied for $t=0$. \\
(Step $\#1$) At step $t=0$, controller $C'$ does not know states $x'_1(0)$ and $x'_3(0)$ but only $u'_{1,2}(0)=h'_{1,2}(x'_1(0))$ and  
$u'_{3,2}(0)=h'_{3,2}(x'_3(0))$. 
In line 6, Algorithm \ref{alg1}  
selects $u'_{2,1} \in U'_{2,1}$ such that for any pair of external inputs $ (w_1,w_3) \in W_1\times W_3$ and for any state $x'_1(1)\in X'_1$ of $P'$ (reached by $x'_1(0)$ with $(u'_{2,1},h'_{3,1}(x'_3(0)),w_1)\in U'_{1}$) and for any state $x'_3(1)\in X'_3$ of $\Sigma'_3$ (reached by $x'_3(0)$ with $(h'_{1,3}(x'_1),w_3)\in U'_{3}$) there exists a transition $x_q 
\xrightarrow[\text{ }\text{ }q\text{ }\text{ }]{\text{}} x_q^+$ of the transition system $T_{Q'}$ such that
\begin{equation}
\label{e1}
\mathbf{d}'(x'_1(1),H_q(x_q^+))\leq \theta'.
\end{equation}
Since by line 16, $u'_{2,1} = h'_{2,1}(x'_1,x'_3,x_q)$, inclusions $x^+_1\in f'_1(x'_1,u'_{2,1},h'_{3,1}(x'_3),w_1)$ and $x^+_3\in f'_3(x'_3,h'_{1,3}(x'_1),w_3)$ in line 6 imply: 
\begin{equation}
\label{e2}
\begin{array}
{l}
x'_1(1)\in f'_1(x'_1(0), h'_{2,1}(x'_1(0),x'_3(0),x_q),h'_{3,1}(x'_3(0)),w_1(0)),\\
x'_3(1)\in f'_3(x'_3(0),h'_{1,3}(x'_1(0)),w_3(0)),
\end{array}
\end{equation}
where $(x'_1(1),x'_3(1))=(x^+_1,x^+_3)$, $(x'_1(0),x'_3(0))=(x'_1,x'_3)$ and $(w_1(0),w_3(0))=(w_1,w_3)$.
Moreover, by line 15, we obtain:
\begin{equation}
\label{e3}
x'_2(1)\in f'_2(x'_2(0),h'_{1,2}(x'_1(0)),h'_{3,2}(x'_3(0))).
\end{equation}
By combining (\ref{e2}) and (\ref{e3}) we get that $(x'_1(0),x'_2(0),x'_3(0))\,(x'_1(1),x'_2(1),x'_3(1))$ is a state trajectory of $\Sigma(\mathcal{N}')$. 
By setting $q'_1=H_q(x_q^+)$ and by (\ref{e1}), the inequality in (\ref{ineq}) is satisfied for $t=1$. By repeating (Step $\#1$) for $t=2$, we get that 
$(x'_1(0),x'_2(0),x'_3(0))\,(x'_1(1),x'_2(1),x'_2(1))\,(x'_1(2),x'_2(2),x'_3(2))$ is a state trajectory of $\Sigma(\mathcal{N}')$ for some states $x'_1(2)\in X'_1$, $x'_2(2)\in X'_2$ and $x'_3(2)\in X'_3$, and that the inequality in (\ref{ineq}) is satisfied at $t=2$, for some $q'_2$. 
Iteration above terminates when a marked state $x'_2(l')$, $l'\in\mathbb{N}$, is reached in $T^c$, or equivalently, when $x'_2(l')\in X'_{2,f}$. Such a state exists because $T^c$ is trim (line 13). Moreover, $x'_2(l')=(x'_1,x'_3,x_q)$, for some $x'_1$, $x'_3$ and $x_q$, where by lines 16 and 4, $x_q$ is marked also for $T_{Q'}$. As a consequence, the sequence $q'_0\, q'_1\, ...\,q'_{l'}$ is a word of $Q'$, which concludes the proof. 
\end{proof}
The result presented above can be used to solve Problem \ref{problem1} for NoS $\mathcal{N}$ by reformulating $\mathcal{N}$ as an appropriate NoS that is in the same form as NoS $\mathcal{N}'$. The main drawback of this approach is concerned with time computational complexity because the number of (needed-to-be-explored) transitions of nondeterministic system $\Sigma'_3$, composed of $N-2$ subsystems, scales doubly-exponential with $N-2$. 
In the following section we propose an approach to tame such a complexity.

\section{Incremental design of NoS}\label{sec5}
In this section we propose an approach to solve Problem \ref{problem1} that we call \textit{incremental}. 
Given the NoS $\mathcal{N}=(\mathcal{V},\mathcal{E},h)$, let $V$ be a set that is in bijection with $\mathcal{V}$ and such that $v_i\in V$ is uniquely associated to $\Sigma_i\in \mathcal{V}$; let $E\subseteq V\times V$ be the collection of pairs $(v_i,v_j)$ such that $(\Sigma_i,\Sigma_j)\in\mathcal{E}$. 
The pair $(V,E)$ is a directed graph.  
Given directed graph $D=(V,E)$, for any $k\in \mathbb{N}$ consider the sequence of directed graphs $D_k=(V_k,E_k)$ specified by: 
$V_0=\{v_1,v_2\}$, 
$E_0=\{(v_1,v_2),(v_2,v_1)\}\cap E$, 
$V_{k+1}=V_k \,\cup \, \{v_j\in V\backslash V_k \,|\, \exists v_i\in V_k \text{ s.t. } (v_j,v_i)\in E \}$, 
$E_{k+1}=E \cap ( V_{k+1}\times V_{k+1} )$, $k\in \mathbb{N}$. For any $k\in \mathbb{N}$ let $N_k=\vert V_k \vert$. 
In the sequel, we assume w.l.o.g. that for any $k\in \mathbb{N}$, $V_k=\{v_1,v_2,...,v_{N_k}\}$.
\begin{definition}
\label{defSigmaIK}

For any $k\in \mathbb{N}$ and any $i\in [1;N_k]$ define the finite state systems:
\begin{equation}
    \label{sigmaik}
\Sigma_{i}(k) : 
\left\{
\begin{array}
{l}
x_i(t+1)\in f_i^k(x_i(t),u_i^k(t),w_i^k(t)),\\
x_i(0) \in X_{i,0},
x_i(t) \in X_i, \\
u_i^k(t) \in U_i^k,
w_i^k(t) \in W_i^k, t\in\mathbb{N},
\end{array}
\right.
\end{equation}
where: $u_i^k(.)=(h_{1,i}(x_1(.)),...,h_{i-1,i}(x_{i-1}(.)),h_{i+1,i}(x_{i+1}(.)),$ $..., h_{N_k,i}(x_{N_k}(.)))$; $w_i^k(.)=(w_{N_k+1}(.),w_{N_k+2}(.),...,w_{N}(.))$ and $w_j:\mathbb{N} \rightarrow U_j, j\in [N_k+1;N]$; $U_i^k=\Pi_{j\in [1;N_k]}U_j$; $W_i^k=\Pi_{j\in [N_k+1;N]}U_j$,
and $f_i^k : X_i \times U_i^k \times W_i^k \rightarrow 2^{X_i}$ is defined by $f_i^k(x_i,u_i^k,w_i^k)
:=f_i(x_i,u'_i)$, 
with $u'_i=(u_i^k,w_i^k)$.     
\end{definition}
In the sequel, we refer to $u_i^k$ as the internal input of $\Sigma_i(k)$ and to $w_i^k$ as the external input of $\Sigma_i(k)$.  
We are now ready to define sequence of NoS $\{\mathcal{N}(k)\}_{k\in\mathbb{N}}$ by
\begin{equation}
\label{NdiK}
\mathcal{N}(k)=(\mathcal{V}(k),\mathcal{E}(k),h^k),
\end{equation}
where: $\mathcal{V}(k)=\{\Sigma_{1}(k),\Sigma_{2}(k),...,\Sigma_{N_k}(k)\}$, $(\Sigma_{i}(k),\Sigma_{j}(k))\in\mathcal{E}(k)$ if and only if $(v_i,v_j)\in E_k$, $h_{j,i}^k=h_{j,i}$ for all $(v_j,v_i)\in E_{k}$. Note that $\mathcal{N}(k)$ is a sub-NoS of NoS $\mathcal{N}(k')$ for $k<k'$, and that NoS $\mathcal{N}(0)$ exhibits a decentralized control architecture because controller $C=\Sigma_2$ shares information only with plant $P=\Sigma_1$. It is readily seen that 
\begin{proposition}
\label{prop1}
$\exists K\in\mathbb{N}$ s.t. $\mathcal{N}(k)=\mathcal{N}, \forall k\geq K$.
\end{proposition}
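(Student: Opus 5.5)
The plan has two parts: show that the vertex sets $V_k$ stabilize, and then show that the stable set is all of $V$. Everything else in $\mathcal{N}(k)$ is determined by $V_k$. Indeed, $E_k=E\cap(V_k\times V_k)$ for $k\geq 1$, and $h^k$ is the collection of $h_{j,i}$ with $(v_j,v_i)\in E_k$. Also, once $N_k=N$, the external input $w_i^k$ is empty and $f_i^k$ coincides with $f_i$. So it suffices to show $V_k=V$ for all $k\geq K$; for $k\geq 1$ this gives $E_k=E$, hence $\Sigma_i(k)=\Sigma_i$ and $\mathcal{N}(k)=\mathcal{N}$.

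\emph{Step 1 (stabilization).} By construction $V_k\subseteq V_{k+1}\subseteq V$, and $V$ is finite with $|V|=N$. A nondecreasing chain of subsets of a finite set can strictly increase at most $N-2$ times. Moreover, if $V_{k+1}=V_k$ for some $k$, the defining recursion gives $V_{k+2}=V_{k+1}$, and so on by induction. Hence there is $K_0\leq N-2$ with $V_k=V_{K_0}$ for all $k\geq K_0$. The set $V_{K_0}$ is closed under predecessors: if $v_i\in V_{K_0}$ and $(v_j,v_i)\in E$, then $v_j\in V_{K_0+1}=V_{K_0}$.

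\emph{Step 2 (exhaustion).} This step uses Assumption \ref{ass0} and is the only genuinely non-trivial point. Take any $v_j\in V$. The assumption gives a directed path $v_j=u_1\,u_2\,\dots\,u_l$ in $D$ ending at $v_1$ or $v_2$, and both of these lie in $V_0\subseteq V_{K_0}$. I then induct backwards along the path: if $u_{s+1}\in V_{K_0}$, then since $(u_s,u_{s+1})\in E$, predecessor-closure gives $u_s\in V_{K_0}$. Therefore $v_j\in V_{K_0}$, and $V_{K_0}=V$. The point to get right is the direction of the edges. The recursion adds vertices $v_j$ with an edge $(v_j,v_i)$ into the current set, so the path in the assumption must run \emph{from} $\Sigma_i$ \emph{to} $C$ or $P$, which is exactly how Assumption \ref{ass0} is phrased.

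Finally, set $K=\max\{K_0,1\}$. The case $K_0=0$ needs separate care because $E_0$ is defined differently. It can only occur if $V=\{v_1,v_2\}$, and then $E_0=E$ anyway, since self-loops are excluded. In either case $\mathcal{N}(k)=\mathcal{N}$ for all $k\geq K$, with $K\leq\max\{N-2,1\}$.
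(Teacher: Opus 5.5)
Your proof is correct. The paper gives no argument beyond ``It is readily seen that'', and your route supplies exactly the reasoning it leaves implicit: the nondecreasing chain $V_0\subseteq V_1\subseteq\cdots$ stabilizes in the finite set $V$, the limit is closed under predecessors, and backward induction along the path from Assumption~\ref{ass0} shows every vertex is reached. You are right that Assumption~\ref{ass0} is indispensable, since a vertex with no directed path to $C$ or $P$ is never added to any $V_k$. The separate care for $K_0=0$ is not needed: self-loops are excluded, so $E_0=\{(v_1,v_2),(v_2,v_1)\}\cap E=E\cap(V_0\times V_0)$, and hence $E_k=E\cap(V_k\times V_k)$ for every $k$.
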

We now solve Problem \ref{problem1} for NoS $\mathcal{N}(k)$, $k=1,2,...$,  by making use of Algorithm \ref{alg1}. We assume not to have  knowledge of solution at step $k-1$, when designing solution at step $k$. 
Define operator $\mathbb{A}$ associated to Algorithm \ref{alg1} by:
\begin{equation}
\label{aaa}
(C',X'_{2,f},h'_{2,1},\mathcal{R}',T^c,U'_{2,not})=\mathbb{A}(\mathcal{N}',Q',\theta'),
\end{equation}
if inputs and outputs of Algorithm \ref{alg1} are $(\mathcal{N}',Q',\theta')$ and $(C',X'_{2,f},h'_{2,1},\mathcal{R}',T^c,U'_{2,not})$, respectively. 
In order to solve Problem \ref{problem1} for NoS $\mathcal{N}(k)$ by means of Algorithm \ref{alg1}, we first need to associate an appropriate NoS $\mathcal{N}'(k)$ to 
$\mathcal{N}(k)$, which is 
in the same form as $\mathcal{N}'$. 
\begin{definition}
    \label{defsigma'jk}
For any $k\in \mathbb{N}$ and any $j\in [1;3]$ define the finite state systems:
\begin{equation}
\label{sigmaj'k}
\Sigma'_{j}(k) : 
\left\{
\begin{array}
{l}
x_j^{\prime,k}(t+1)\in f_j^{\prime,k}(x^{\prime,k}_j(t),u^{\prime,k}_j(t),w^{\prime,k}_j(t)),\\
x^{\prime,k}_j(0) \in X^{\prime,k}_{j,0},
x^{\prime,k}_j(t) \in X^{\prime,k}_j, \\ 
u^{\prime,k}_j(t) \in U^{\prime,k}_j,
w^{\prime,k}_j(t) \in W^{\prime,k}_j, t\in\mathbb{N},
\end{array}
\right.
\end{equation}
where:
$x^{\prime,k}_1(.)$ is a state trajectory of $\Sigma_1(k)$ in (\ref{sigmaik}), 
$x^{\prime,k}_2(.)$ is a state trajectory of $\Sigma_2(k)$ in (\ref{sigmaik}), 
$x^{\prime,k}_3(.)$ is a state trajectory of the system obtained by interconnecting system $\Sigma_i(k)$ for $i\in [3;N_k]$ in (\ref{sigmaik});
$X^{\prime,k}_1=X_1$, 
$X^{\prime,k}_2(.)=X_2$, 
$X^{\prime,k}_3=\Pi_{i\in [3;N_k]}X_i$; 
internal inputs:
$u^{\prime,k}_1(.)=u^k_1(.)$, 
$u^{\prime,k}_2(.)=u^k_2(.)$,
$u^{\prime,k}_3(.)=(u^k_3(.),u^k_4(.),...,u^k_{N_k}(.))$, where $u^k_i(.)$ are as in Definition \ref{defSigmaIK};
external inputs: 
$w^{\prime,k}_1(.)$, $j\in [1;3]$, is a vector where entries are all $u_i(.)$ such that $\Sigma_i\notin \mathcal{V}(k)$ and $(\Sigma_i,\Sigma_1)\in \mathcal{E}(k)$,
$w^{\prime,k}_2(.)=w_{null}$ is a dummy external input (meaning that there is no external input acting on the system considered), and
$w^{\prime,k}_3(.)$ is a vector where entries are all $u_i(.)$ such that $\Sigma_i\notin \mathcal{V}(k)$ and $(\Sigma_i,\Sigma_{i'})\in \mathcal{E}(k)$ for some $i' \in [3;N_k]$;
$U^{\prime,k}_1=U^k_1$,
$U^{\prime,k}_2=U^k_2$, and
$U^{\prime,k}_3=\Pi_{i\in[3;N_k]}U_i$;
$W^{\prime,k}_1$ is the cross product of sets $U_i$ such that $\Sigma_i\notin \mathcal{V}(k)$ and $(\Sigma_i,\Sigma_1)\in \mathcal{E}(k)$,
$W^{\prime,k}_2=\{w_{dummy}\}$, and
$W^{\prime,k}_3$ is the cross product of sets $U_i$ such that $\Sigma_i\notin \mathcal{V}(k)$ and $(\Sigma_i,\Sigma_{i'})\in \mathcal{E}(k)$ for some $i' \in [3;N_k]$,
function $f^{\prime,k}_j : X^{\prime,k}_j \times U^{\prime,k}_j \times W^{\prime,k}_j \rightarrow 2^{X^{\prime,k}_j}$ is defined by 
$f^{\prime,k}_j(x^{\prime,k}_j,u^{\prime,k}_j,w^{\prime,k}_i)
:=f^{\prime,k}_j(x^{\prime,k}_j,u')$, with $u'=u^{\prime,k}_j,w^{\prime,k}_j)$.
\end{definition}
\begin{definition}
\label{defN''(k)}
Given NoS $\mathcal{N}(k)$ define NoS
$\mathcal{N}'(k):=(\mathcal{V}'(k),\mathcal{E}'(k),h^{\prime,k})$, where
$\mathcal{V}'(k)=\{\Sigma'_1(k),\Sigma'_2(k),\Sigma'_3(k)\}$;
$\mathcal{E}'(k)$ is composed of edge $(\Sigma'_2(k),\Sigma'_1(k))$ and edges: 
$(\Sigma'_1(k),\Sigma'_2(k))$, if $(\Sigma_1(k),\Sigma_2(k))\in\mathcal{E}(k)$; 
$(\Sigma'_1(k),\Sigma'_3(k))$, if $\exists i \in [3;N_k]$ s.t.
$(\Sigma_1(k),\Sigma_i(k))\in\mathcal{E}(k)$; 
$(\Sigma'_3(k),\Sigma'_1(k))$, if $\exists i \in [3;N_k]$ s.t. $ (\Sigma_i(k),\Sigma_1(k))\in\mathcal{E}(k)$;
$(\Sigma'_3(k),\Sigma'_2(k))$, if $\exists i \in [3;N_k]$ s.t.  $(\Sigma_i(k),\Sigma_2(k))\in\mathcal{E}(k)$; connection function $h^{\prime,k}$, comprising collection of functions $h^{\prime,k}_{j,i}$, is derived by connection functions $h_{j,i}$, accordingly.
\end{definition}
Define sequences of controllers $\{C(k)\}_{k\in \mathbb{N}}$, final states $\{X_{2,f}(k)\}_{k\in \mathbb{N}}$, connection functions $\{h^{\prime,k}_{2,1}\}_{k\in \mathbb{N}}$, relations $\{\mathcal{R}(k)\}_{k\in \mathbb{N}}$ and sets $\{U'_{2,not}(k)\}_{k\in \mathbb{N}}$, as follows:
\begin{equation}
\label{quasincr}
\begin{array}
{l}
(C(k),X_{2,f}(k),h^{\prime,k}_{2,1},\mathcal{R}(k),T^{c(k)},U'_{2,not}(k))=\\
\mathbb{A}(\mathcal{N}'(k),Q,\theta),k\in\mathbb{N}.
\end{array}
\end{equation}
We can now give the following result.
\begin{theorem}
\label{thquasincr}
For any $k\in\mathbb{N}$, quintuple 
$(C(k),X_{2,f}(k),h^k_{2,1},\mathcal{R}(k),T^{c(k)},U'_{2,not}(k))$ in (\ref{quasincr})
 solves Problem \ref{problem1} for NoS $\mathcal{N}$, with $(C,X_{2,f},h_{2,1},\mathcal{R},U'_{2,not})=(C(k),X_{2,f}(k),h^k_{2,1},\mathcal{R}(k),$ $U'_{2,not}(k))$. 
\end{theorem}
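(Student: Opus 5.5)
The plan is to reduce Theorem \ref{thquasincr} to Theorem \ref{ThSimplified1}. Algorithm \ref{alg1}, run on $\mathcal{N}'(k)$, yields a controller whose guarantee holds against \emph{all} external inputs $w^{\prime,k}_1,w^{\prime,k}_3$. So the argument reduces to one point: every trajectory of the full network $\Sigma_{[1;N]}$ is, after projection, a trajectory of $\Sigma(\mathcal{N}'(k))$ for some choice of external inputs.

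First I fix $k$ and check that $\mathcal{N}'(k)$ has the form of $\mathcal{N}'$ in Section \ref{sec4}. Here $\Sigma'_1(k)=P$, $\Sigma'_2(k)=C$, and $\Sigma'_3(k)$ is the interconnection of $\Sigma_3(k),\dots,\Sigma_{N_k}(k)$. The edge $(\Sigma'_2(k),\Sigma'_3(k))$ is absent by Assumption \ref{ass0}, since $C$ only feeds $P$. The controller $C'$ has no external input, so $W^{\prime,k}_2$ is a singleton. Absent edges of Definition \ref{defN''(k)} can be modeled by constant connection functions. Theorem \ref{ThSimplified1} therefore applies to $\mathbb{A}(\mathcal{N}'(k),Q,\theta)$. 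It says that for every trajectory of $\Sigma(\mathcal{N}'(k))$ starting in $\mathcal{R}(k)\times X^{\prime,k}_{3,0}$, under any external input sequences and ending with the controller in $X_{2,f}(k)$, there is a word of $Q$ of the same length that is $\theta$-close to the plant trajectory.

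Second, I prove the embedding lemma. Let $\mathbf{x}(.)=(x_1(.),\dots,x_N(.))$ be a trajectory of $\Sigma_{[1;N]}$ in closed loop with $C(k)$ and $h_{2,1}=h^{\prime,k}_{2,1}$, starting in $\mathcal{R}(k)\times X_{3,0}\times\dots\times X_{N,0}$. I define the external inputs of $\mathcal{N}(k)$ by $w_j(t):=h_{j,i}(x_j(t))$ for $\Sigma_j\notin\mathcal{V}(k)$, i.e.\ the actual signals produced by systems outside the sub-NoS. With this choice, Definition \ref{defSigmaIK} gives $f_i^k(x_i,u_i^k,w_i^k)=f_i(x_i,u_i)$ for each $i\le N_k$. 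Hence $(x_1(.),\dots,x_{N_k}(.))$ is a trajectory of the interconnection of $\Sigma_i(k)$, $i\in[1;N_k]$. Grouping the components as in Definition \ref{defsigma'jk}, the triple $(x_1(.),x_2(.),(x_3(.),\dots,x_{N_k}(.)))$ is a trajectory of $\Sigma(\mathcal{N}'(k))$ driven by admissible $w^{\prime,k}_1(.)$ and $w^{\prime,k}_3(.)$. The initial state lies in $\mathcal{R}(k)\times\Pi_{i\in[3;N_k]}X_{i,0}$.

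Third, I conclude. If $x_2(l)\in X_{2,f}(k)$, the guarantee from the first step applied to the embedded trajectory gives $q_0\dots q_l\in Q$ with $\mathbf{d}(x_1(t),q_t)\le\theta$ for all $t$. This is exactly (\ref{ineq}), so $(C(k),X_{2,f}(k),h^{\prime,k}_{2,1},\mathcal{R}(k))$ solves Problem \ref{problem1} for $\mathcal{N}$. The extra outputs $T^{c(k)}$ and $U'_{2,not}(k)$ are carried along and play no role.

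The main obstacle is making the embedding lemma rigorous, because Theorem \ref{ThSimplified1} is really a universal statement over external inputs. Algorithm \ref{alg1} quantifies over all $(w_1,w_3)\in W_1\times W_3$ in (\ref{eqAlg1}), and the states outside $\mathcal{V}(k)$ are unmeasured (Remark \ref{RemSubNos}). So the controller's guarantee must be read as robust against arbitrary external sequences, including those generated by the rest of $\mathcal{N}$. I would make this explicit by restating Theorem \ref{ThSimplified1} as ``for all external input sequences $w_1(.),w_3(.)$''. I would also check carefully that $W^{\prime,k}_1,W^{\prime,k}_3$ contain every value $h_{j,i}(x_j)$ with $\Sigma_j\notin\mathcal{V}(k)$. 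This holds by construction, since they are products of the full sets $U_{j,i}$.
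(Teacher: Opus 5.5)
Your proposal is correct and follows the paper's own route. The paper's proof likewise views any closed-loop trajectory of $\Sigma_{[1;N]}$ as the trajectory $(x_1(.),x_2(.),(x_3(.),\dots,x_{N_k}(.)))$ of $\Sigma(\mathcal{N}'(k))$, treating signals from systems outside $\mathcal{V}(k)$ as external inputs, and then invokes the guarantee of Algorithm~\ref{alg1} (Theorem~\ref{ThSimplified1}) through (\ref{quasincr}); your extra checks (that $\mathcal{N}'(k)$ has the form of $\mathcal{N}'$, and that the external inputs are instantiated by the actual signals $h_{j,i}(x_j(.))$, whose values lie in $W^{\prime,k}_1\times W^{\prime,k}_3$) only make explicit what the paper leaves implicit.
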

\begin{proof}
Consider any state trajectory $\mathbf{x}(.)=(x_1(.),x_2(.),...,x_N(.))$ of $\Sigma_{[1;N]}$ with controller $C=C(k)$ and initial state $\mathbf{x}(0) \in \mathcal{R}(k) \times X_{3,0} \times ... \times X_{N,0}(k)$. Let $l+1$ be the length of $\mathbf{x}(.)$. Any $x_i(.)$ of $\mathbf{x}(.)$ satisfies (\ref{ith-plant}) for some $u_i(.)$ in the form of (\ref{ui}).
By definition of $\Sigma_i(k)$, function $x_i(.)$ is a state trajectory of $\Sigma_i(k)$ with internal input $u^k_i(.)$ and external input $w^k_i(.)$. As a consequence, function 
$\mathbf{x}^k(.)=(x_1(.),x_2(.),...,x_{N_k}(.))$ is a state trajectory of $\Sigma_{[1;N_k]}$.
By Definition \ref{defN''(k)}, triplet $(x^{\prime,k}_1(.),x^{\prime,k}_2(.),x^{\prime,k}_3(.))$, with 
$x^{\prime,k}_1(.)=x_1(.)$, $x^{\prime,k}_2(.)=x_2(.)$ and 
$x^{\prime,k}_3(.)=(x_3(.),x_4(.),...,x_{N_k}(.))$, is a state trajectory of 
$\Sigma(\mathcal{N}'(k))$, where $\Sigma(\mathcal{N}'(k))$ denotes the system obtained by the interconnection of $\Sigma'_1(k)$, $\Sigma'_2(k)$ and $\Sigma'_3(k)$. 
By (\ref{quasincr}), $(x^{\prime,k}_1(0),x^{\prime,k}_2(0))\in\mathcal{R}(k)$, $x^{\prime,k}_2(l)\in X_{2,f}(k)$ and there exists a word $q_0\,q_1\,...\,q_l\in Q$ with length $l+1$, such that $\mathbf{d}(x^{\prime,k}_1(t),q_t)\leq \theta,\forall t\in[0;l]$, which implies, since  $x^{\prime,k}_1(.)=x_1(.)$, that inequality (\ref{ineq}) holds. 
\end{proof}
The following result shows that as long as index $k$ increases, the solution obtained gets better, in terms of the part of the specification that can be enforced on the plant.
\begin{theorem}
\label{thBehnonincr}
For any $k\in\mathbb{N}$ and for any quadruplet $(C(k),X_{2,f}(k),h^k_{2,1},\mathcal{R}(k))$ extracted from (\ref{quasincr}), the following inclusion holds: 
$
\mathcal{Q}
(\Sigma_{[1;N]},C(k),X_{2,f}(k),h_{2,1}(k),\mathcal{R}_0(k))\subseteq \\
\mathcal{Q}(\Sigma_{[1;N]},C(k+1),X_{2,f}(k+1),h_{2,1}(k+1),\mathcal{R}_0(k+1))
$, 
where $\mathcal{R}_0(k')=\mathcal{R}(k') \times X_{3,0} \times ... \times X_{N,0}$, with $k'=k,k+1$. Moreover, there exists $K\in\mathbb{N}$ such that for any $k\geq K$
\begin{equation}
\label{stat2}
\begin{array}
{l}
\mathcal{Q}
(\Sigma_{[1;N]},C(k),X_{2,f}(k),h_{2,1}(k),\mathcal{R}_0(k))
=\\\mathcal{Q}(\Sigma_{[1;N]},C,X_{2,f},h_{2,1},\mathcal{R}).
\end{array}
\end{equation}
\end{theorem}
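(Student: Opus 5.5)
The plan is to compare the two controllers through the conditions checked in line 6 of Algorithm \ref{alg1}, applied to $\mathcal{N}'(k)$ and to $\mathcal{N}'(k+1)$. The guiding observation is that passing from $\mathcal{N}(k)$ to $\mathcal{N}(k+1)$ only moves systems from the ``external'' side to the ``internal'' side. Each component $w_j$ that was an arbitrary external input in $\mathcal{N}'(k)$ (Definition \ref{defsigma'jk}) becomes either the output $h_{j,i}(x_j(\cdot))$ of a system now inside $\Sigma'_3(k+1)$, or stays external. Hence the set of successor pairs $(x^+_1,x^+_3)$ that must be handled in (\ref{eqAlg1}) at step $k+1$, once projected on the components present at step $k$, is contained in the set handled at step $k$. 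This projection compares $X^{\prime,k+1}_3=\Pi_{i\in[3;N_{k+1}]}X_i$ with $X^{\prime,k}_3$ via the map forgetting the coordinates $i\in[N_k+1;N_{k+1}]$.

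The proof would proceed in three steps.

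\textbf{Step 1 (monotonicity of feasibility).} I would show by induction on the construction of $T^{c(k)}$ that for every state $(x_1,x_3^{k},x_q)$ of $T^{c(k)}$ and every extension $x_3^{k+1}$ of $x_3^{k}$ that is reachable in $\Sigma_{[1;N]}$, the state $(x_1,x_3^{k+1},x_q)$ is generated in $T^{c(k+1)}$. Moreover, every $u'_{2,1}$ satisfying (\ref{eqAlg1}) at step $k$ also satisfies it at step $k+1$.
\begin{itemize}
\item The initial sets match by line 4, since the $\theta$-condition involves only $x_1$ and $x_q$.
\item For the inductive step, the universally quantified set of successors at step $k+1$ is a subset of the one at step $k$. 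The extra coordinates are now driven by $h_{j,i}$ of actual states instead of an arbitrary $w$.
\item The controller at step $k+1$ measures more. Its input $u'_2$ contains the additional components $h_{j,2}$ for $j\in[N_k+1;N_{k+1}]$, so the preimage sets $(h'_{3,2})^{-1}(u'_{3,2})$ are no larger.
\item The existential part ($\exists\, x_q\to x_q^+$) is unchanged.
\end{itemize}
Since the controller transitions are labelled by observed outputs, this also gives that the $\Trim$ of line 13 preserves the embedded states: co-accessibility to $X_{q,m}$ only depends on the $x_q$ component and on the surviving transitions.

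\textbf{Step 2 (inclusion of $\mathcal{Q}$).} Take $q_0\cdots q_l\in\mathcal{Q}(\Sigma_{[1;N]},C(k),\dots)$, witnessed by a trajectory $\mathbf{x}(\cdot)$ with controller states $x_2(t)=(x_1(t),x^k_3(t),x_q(t))$. By Step 1, there is a trajectory of $\Sigma_{[1;N]}$ closed with $C(k+1)$ that has:
\begin{itemize}
\item the same plant trajectory $x_1(\cdot)$;
\item controller states $(x_1(t),x^{k+1}_3(t),x_q(t))$;
\item final state in $X_{2,f}(k+1)=X^{c(k+1)}_m$, because marking only depends on $x_q$.
\end{itemize}
The same word therefore witnesses membership in the larger set, and (\ref{ineq}) holds with the same $q_t$.

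The hard part is that the plant trajectory is driven by $h_{2,1}(k)$ in the first closed loop and by $h_{2,1}(k+1)$ in the second, and Algorithm \ref{alg1} picks the value of $h$ as the last $u'_{2,1}$ found in the inner loop. I would try to fix this by taking a canonical choice, for instance a fixed ordering of $U'_{2,1}$ with the first feasible value selected. By Step 1, the feasible set at step $k$ is contained in the feasible set at step $k+1$. If the two first feasible elements still differ, I would instead argue at the level of the union over admissible choices: the word is realized by some admissible input at step $k+1$, and the inclusion is read on the parts of the specification. This selection issue is the step I expect to be the main obstacle.

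\textbf{Step 3 (stabilization).} By Proposition \ref{prop1}, there is $K$ with $\mathcal{N}(k)=\mathcal{N}$ for all $k\geq K$. Then $\mathcal{N}'(k)$ is the same three-block reformulation of $\mathcal{N}$ for all such $k$, and (\ref{quasincr}) returns identical outputs, since $\mathbb{A}$ is deterministic once the selection rule is fixed. This output is exactly the quadruplet $(C,X_{2,f},h_{2,1},\mathcal{R})$ obtained by applying Algorithm \ref{alg1} directly to $\mathcal{N}$, as discussed after Theorem \ref{ThSimplified1}. This gives (\ref{stat2}).
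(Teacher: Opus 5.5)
Your Step~3 coincides with the paper's stabilization argument (Proposition~\ref{prop1}, after which (\ref{quasincr}) simply returns $\mathbb{A}$ applied to the three-block reformulation of $\mathcal{N}$). The intuition behind Steps~1--2 is also the one the paper invokes: inputs external to $\mathcal{N}(k)$ become internal in $\mathcal{N}(k+1)$, which only removes uncertainty. The inclusion itself, however, is not established. The selection issue you flag is a genuine gap, and neither proposed fix closes it.

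$\mathcal{Q}$ in Definition~\ref{defPartSpec} is existential over closed-loop trajectories of the \emph{specific} quadruple. Your witness $\mathbf{x}(\cdot)$ is generated while the plant receives $h_{2,1}(k)(x_2(t))$. Membership in the $(k+1)$-set needs a trajectory of the loop closed with $C(k+1)$, where the plant receives $h_{2,1}(k+1)(\cdot)$. If these inputs differ, the plant successors differ, and the $\theta$-tracking of that particular word can be lost.

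A fixed ordering with first-feasible selection fails precisely because, by your own Step~1, the feasible set grows with $k$. Take the following example:
\begin{itemize}
\item $U'_{2,1}=\{a,b\}$, with $a$ listed first;
\item a system in $V_{k+1}\setminus V_k$ whose output to $P$ is constantly $0$ but is a free external input at step $k$;
\item $a$ sends the plant to $y$ under output $0$ and off the specification under output $1$, while $b$ always sends it to $x$;
\item $\theta=0$ and $Q\supseteq\{x^0x,\,x^0y\}$.
\end{itemize}
Then $C(k)$ must apply $b$ while $C(k+1)$ applies $a$, so $x^0x$ lies in the first set but not in the second. With the order reversed, the algorithm's own ``last feasible'' rule fails in the same way. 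Your other fix, taking the union over admissible selections, proves an inclusion for a different object than the one in the statement.

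Step~1 has a further hole. Condition (\ref{eqAlg1}) at step $k+1$ quantifies universally over the states \emph{currently in} $X^{c(k+1)}$ that carry the observed label. Your bullets compare successors of a given state and preimage sets. Transferring feasibility from $k$ to $k+1$ also requires every such state to project into $X^{c(k)}$, which is the converse of the embedding you set out to prove. That converse fails as soon as $T^{c(k+1)}$ contains plant states reached through an input feasible only at step $k+1$, which is exactly what the theorem is about. Such states enlarge the universally quantified set and can make a $u'_{2,1}$ that was feasible at step $k$ infeasible at step $k+1$.

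Moreover, Algorithm~\ref{alg1} is a single pass over $U'_2$. The content of $X^c$ when a label is examined therefore depends on an enumeration order that you would have to align between $U^k_2$ and $U^{k+1}_2$. Your claim that $\Trim$ preserves the lifted states is likewise asserted rather than shown.

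For calibration, the paper's proof does not resolve these points either. It takes the $C(k)$-witness and concludes membership in the $C(k+1)$-set from the robustness of $C(k)$ to external inputs, implicitly treating that trajectory as one of the $C(k+1)$ loop. You have located the weak point correctly. To turn your outline into a proof, you need an ingredient that Algorithm~\ref{alg1} as written does not provide, for instance that $C(k+1)$ reproduces the plant inputs of $C(k)$ on lifted states and generates no other states.
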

\begin{proof}
Consider any word 
\begin{equation}
\label{word}
q_0\,q_1\,...\,q_l\in \mathcal{Q} (\Sigma_{[1;N]},C(k),X_{2,f}(k),h_{2,1}(k),\mathcal{R}_0(k)).
\end{equation}
By definition of $\mathcal{Q}$, there exists a state trajectory $\mathbf{x}(.)$ of $\Sigma_{[1;N]}$ with initial state $\mathbf{x}(0)\in \mathcal{R}_0(k)$, length $l+1$, and final state $x_2(l)\in X_{2,f}(k)$ of the controller $C(k)$ such that (\ref{ineq}) holds. 
Define
$u^k(.)=(u^k_1(.),u^k_2(.),...,u^k_{N_k}(.))$ and $
w^k(.)=(w^k_{N_k+1}(.),w^k_{N_k+2}(.),...,w^k_{N}(.))$, 
where $u^k_i(.)$ and $w^k_i(.)$ are as in Definition \ref{defSigmaIK}.
By definition of sequence $\{\mathcal{N}(k)\}_{k\in\mathbb{N}}$, all components of $u^k(.)$ are internal to $\Sigma_{[1;N_{k+1}]}$, some components of $w^k(.)$, group them in $w_{ext}^k(.)$, are external to $\Sigma_{[1;N_{k+1}]}$ and the remaining components of $w^k(.)$, group them in $w_{int}^k(.)$, are internal to $\Sigma_{[1;N_{k+1}]}$.
Hence, $u^{k+1}(.)$, collecting all components of $u^k(.)$ and of $w_{int}^k(.)$, is the internal input of $\Sigma_{[1;N_{k+1}]}$ and, $w^{k+1}(.)=w_{ext}^k(.)$ is the external input of $\Sigma_{[1;N_{k+1}]}$. 
By Algorithm \ref{alg1}, controller $C(k)$ is such that $\mathbf{x}(.)$ satisfies condition (\ref{ineq}) with word in (\ref{word}) irrespective of any external input of $\Sigma_{[1;N_{k}]}$ and in particular, for the internal input $w_{int}^k(.)$ of $\Sigma_{[1;N_{k}]}$, and, irrespective of any external input of $\Sigma_{[1;N_{k+1}]}$.
This implies that state trajectory $\mathbf{x}(.)$ satisfies condition (\ref{ineq}) with word in (\ref{word}), with internal input $u^{k+1}(.)$ and irrespective of any external input of 
$\Sigma_{[1;N_{k+1}]}$. 
Hence, word $q_0,q_1\,...\,q_l\in \mathcal{Q} (\Sigma_{[1;N]},C(k+1),X_{2,f}(k+1),h_{2,1}(k+1),\mathcal{R}_0(k+1))$ from which, the first part of the statement holds. 
As for the second part of the statement, by Proposition \ref{prop1}, there exists $K\in\mathbb{N}$ such that $\mathcal{N}(k)=\mathcal{N}$ for all $k\geq K$.  
Hence, for any $k\geq K$, (\ref{quasincr}) rewrites as  
$(C(k),X_{2,f}(k),h^k_{2,1},\mathcal{R}(k),T^{c(k)},U'_{2,not}(k))=\mathbb{A}(\mathcal{N}'',Q,\theta)$, 
which by (\ref{aaa}), implies $C(k)=C'$, $X_{2,f}(k)=X'_{2,f}$, $h^k_{2,1}=h'_{2,1}$, and $\mathcal{R}(k)=\mathcal{R}'$; by definition of $\mathcal{R}_0(k)$ and $\mathcal{R}$, equality $\mathcal{R}(k)=\mathcal{R}$ implies $\mathcal{R}_0(k)=\mathcal{R}$. Since 
$C=C'$, $X_{2,f}=X'_{2,f}$, $h_{2,1}=h'_{2,1}$ and $\mathcal{R}=\mathcal{R}'$, the result follows.
\end{proof}
A drawback of Theorems \ref{thquasincr} and \ref{thBehnonincr}
 is that at each step $k$, Algorithm \ref{alg1} computes tuple $(C(k),X_{2,f}(k),h^k_{2,1},$ $\mathcal{R}(k),U'_{2,not}(k))$ without gaining information from the solution computed at the previous steps $k'<k$. We now  incorporate solutions found at step $k-1$ when finding solutions at step $k'$. 
The main idea is to replace recursive equations in (\ref{quasincr}) by the following ones:
\begin{equation}
\label{incr}
\begin{array}
{l}
(C(0),X_{2,f}(0),h^0_{2,1},\mathcal{R}(0),T^c,U'_{2,not}(0))=\mathbb{A}(\mathcal{N}'(0),Q,\theta);\\
(C(k+1),X_{2,f}(k+1),h^{k+1}_{2,1},\mathcal{R}(k+1),T^{c(k+1)},\\U'_{2,not}(k+1))
=
\partial\mathbb{A}(\mathcal{N}'(k),Q,\theta, T^{c(k)},U'_{2,not}(k)),k \in \mathbb{N},
\end{array}
\end{equation}
where the new operator $\partial\mathbb{A}$ comes into play. The difference w.r.t. operator $\mathbb{A}$ is that here $\partial\mathbb{A}$ depends on $U'_{2,not}(k)$ collecting inputs $u'_2\in U'_2$ not providing solution in Algorithm \ref{alg1}) and also on $T^{c(k)}$ and hence, on the solution $(C(k),X_{2,f}(k),h^k_{2,1},\mathcal{R}(k),U'_{2,not}(k))$ found at the previous step $k$, while $\mathbb{A}$ does not (compare (\ref{quasincr}) and (\ref{incr})). 
Operator $\partial\mathbb{A}$ is defined through Algorithm \ref{alg2}. It is readily seen that 
Algorithm \ref{alg2} terminates in a finite number of steps.
The following result shows that the outcome of recursive equations in (\ref{quasincr}) and in (\ref{incr}) coincide.
\incmargin{1em}
\restylealgo{boxed}\linesnumbered
\begin{algorithm*}[t]
\label{alg2}
\SetLine
\textbf{input}:  
NoS $\mathcal{N}'(k)$; specification $Q$; accuracy $\theta$; Transition system $T^{c(k)}$; Set $U'_{2,not}(k)$\;
\textbf{init}: 
$T^{c(k+1)}=(X^{c(k+1)},X^{c(k+1)}_0,U^{c(k+1)},
\xrightarrow[\text{ }\text{ }c(k+1)\text{ }\text{ }]{\text{}},X^{c(k+1)}_m,Y_q,H^{c(k+1)}):=T^{c(k)}$; $h:X_2(k+1)\rightarrow U_{2,1}$ empty function; $\mathcal{R}(k+1):=\varnothing$; 
$U'_{2,not}(k+1):=\varnothing$ \;
\ForEach {$u'_2=(u'_{1,2},u'_{3,2})\in U'_{2,not}(k)\times (\Pi_{i\in [N_k+1;N_{k+1}]}U_{i,2}(k))$}{
\ForEach {$u'_{2,1}\in U'_{2,1}$ \text{ s.t.} 
\begin{equation}
\label{eqAlg2}
\begin{array}{c}
\forall (x'_1,x'_3,x_q) \in X^{c(k+1)} \cap 
((((h^{\prime,k+1}_{1,2})^{-1})( u'_{1,2})) \times (((h^{\prime,k+1}_{3,2})^{-1})(u'_{3,2})) \times X_q ),
\forall (w_1,w_3) \in W^{\prime,k+1}_1 \times W^{\prime,k+1}_3,\\
\forall x'_1 \in f^{\prime,k+1}_1(x'_1,(u'_{2,1},h^{\prime,k+1}_{3,1}(x'_3)),w_1),\forall x'_3\in f^{\prime,k+1}_3(x'_3,h^{\prime,k+1}_{1,3}(x'_1),w_3),
\exists x_q 
\xrightarrow[\text{ }\text{ }q\text{ }\text{ }]{\text{}}
x_q^+ \text{ s.t. } \mathbf{d}'(x^+_1,H_q(x_q^+))\leq \theta
 \end{array}  
\end{equation}
}{
$X^{c(k+1)}:=X^{c(k+1)} \cup \{(x^+_1,x^+_3,x_q^+)\};H^{c(k+1)}((x^+_1,x^+_3,x_q^+)):=H_q(x_q^+);
h((x'_1,x'_3,x_q)):=u'_{2,1}$\;
$
\xrightarrow[\text{ }\text{ }c(k+1)\text{ }\text{ }]{\text{}}:=\xrightarrow[\text{ }\text{ }c(k+1)\text{ }\text{ }]{\text{}} \cup \{((x'_1,x'_3,x_q),(h^{\prime,k}_{1,2}(x^+_1),h^{\prime,k}_{3,2}(x^+_3)),(x^+_1,x^+_3,x_q^+))\}$\;
}
\If{
$\nexists u'_{2,1}\in U'_{2,1}$ \text{ s.t. (\ref{eqAlg2}) holds}}
{$U'_{2,not}(k+1):=U'_{2,not}(k+1) \cup \{u'_2\}$\;}
}
$T^{c(k+1)}:=\Trim(T^{c(k+1)})$\; 
\textbf{output} $(C(k+1),X_{2,f}(k+1),h^{k+1}_{2,1},\mathcal{R}(k+1),T^{c(k+1)},U'_{2,not}(k+1))$
specified by\;
controller $C(k+1)$ defined by $X_2(k+1):=X^{c(k+1)}$; 
$X_{2,0}(k+1):=X^{c(k+1)}_0$; 
$f_2^{k+1}((x'_1,x'_3,x_q),(u^+_{1,2},u^+_{3,2})):=\left\{
(x^+_1,x^+_3,x_q^+)\in X^{c(k+1)} \,|\, (x'_1,x'_3,x_q) 
\xrightarrow[\text{ }\text{ }c(k+1)\text{ }\text{ }]{(u^+_{1,2},u^+_{3,2})}
(x^+_1,x^+_3,x_q^+)\right\}$\;
$X'_{2,f}(k+1):=X^{c(k+1)}_m$; $h^{k+1}_{2,1}((x'_1,x'_3,x_q)):=h((x'_1,x'_3,x_q))$ for all $(x'_1,x'_3,x_q)\in X_2(k+1)$\; 
relation $\mathcal{R}(k+1)$ defined by $\mathcal{R}(k+1):=\{(x'_1,(x''_1,x''_3,x_q))\in X_1(k+1)\times X_2(k+1)\,|\,x'_1=x''_1 \}$.
\caption{Incremental control design of NoS $\mathcal{N}$.}
\end{algorithm*}

\begin{theorem}
    \label{lemma2}
For any $k\in\mathbb{N}$, 
\[
\begin{array}
{l}
\partial\mathbb{A}(\mathcal{N}'(k),Q,\theta, T^{c(k)},U'_{2,not}(k))=
\mathbb{A}(\mathcal{N}'(k),Q,\theta).
\end{array}
\]    
\end{theorem}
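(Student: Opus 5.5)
The plan is to compare the two algorithms input by input. Write $\mathbb{A}$ for Algorithm \ref{alg1} applied to the NoS at level $k+1$, and $\partial\mathbb{A}$ for Algorithm \ref{alg2}. I will show that they produce the same transition system $T^c$ before the final $\Trim$. Once that is done, the outputs $(C,X_{2,f},h_{2,1},\mathcal{R},T^c,U'_{2,not})$ coincide, because each output is read off the trimmed system by identical lines (the definitions of $C$, $X_{2,f}$, $h_{2,1}$ and $\mathcal{R}$ in Algorithms \ref{alg1} and \ref{alg2} are literally the same).

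The argument rests on a monotonicity property with respect to $k$. Consider a controller input $u'_2$ at level $k$ and a candidate $u'_{2,1}$ that satisfies condition (\ref{eqAlg1}), quantified over all external inputs $(w_1,w_3)\in W^{\prime,k}_1\times W^{\prime,k}_3$. I will first prove a lemma: the pair $(u'_2,u'_{2,1})$, with $u'_2$ lifted to level $k+1$, still satisfies the condition at level $k+1$. The reason is the one already used in the proof of Theorem \ref{thBehnonincr}. The signals that become internal at level $k+1$ range over a subset of what was previously quantified universally as external input. So every successor pair $(x^+_1,x^+_3)$ generated at level $k+1$ projects onto a successor generated at level $k$, and for that successor a matching $x_q\to x_q^+$ already exists. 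Consequently, every transition that $\mathbb{A}$ adds for an input $u'_2\notin U'_{2,not}(k)$ is already present in $T^{c(k)}$, up to the natural embedding of states $X_3$ of level $k$ into level $k+1$. This justifies initializing with $T^{c(k+1)}:=T^{c(k)}$ and restricting the outer loop of Algorithm \ref{alg2} to $U'_{2,not}(k)$, extended with the new components $\Pi_{i\in[N_k+1;N_{k+1}]}U_{i,2}$.

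Next I will argue the converse inclusion. For each input $u'_2$ over $U'_{2,not}(k)$ times the new components, the inner loop of Algorithm \ref{alg2} is verbatim the inner loop of Algorithm \ref{alg1}. It therefore adds exactly the transitions that $\mathbb{A}$ adds for that $u'_2$. For the remaining inputs, the previous paragraph shows that the transitions $\mathbb{A}$ would add are precisely those inherited from $T^{c(k)}$. Taking the union gives equal pre-trim systems, and hence equal $\Trim$ outputs. The $U'_{2,not}$ sets agree for the same reason: any $u'_2$ that had a valid $u'_{2,1}$ at level $k$ still has one at level $k+1$. So only elements derived from $U'_{2,not}(k)$ can enter $U'_{2,not}(k+1)$, and among these Algorithm \ref{alg2} tests exactly what $\mathbb{A}$ tests.

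The main obstacle I expect is the bookkeeping of the state and input spaces. $T^{c(k)}$ lives on $X_1\times\Pi_{i\in[3;N_k]}X_i\times X_q$, while level $k+1$ enlarges the $x'_3$ component. Also, $T^{c(k)}$ was trimmed, so transitions may have been removed that the untrimmed run of $\mathbb{A}$ at level $k+1$ would keep. My first attempt would be to fix a canonical embedding, letting new coordinates range freely over the newly included $X_i$ (equivalently, treating old states as sets of new states). I would then check that $\Trim$ commutes with this embedding, using that the set of co-accessible states computed by the fixed point $X(k')$ is monotone in the transition relation. If the trimming step does not commute cleanly, I would fall back to comparing the untrimmed systems and invoking idempotence of $\Trim$.
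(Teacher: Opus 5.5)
Your route is the paper's: compare the update lines of the two algorithms, and use $U'_{2,not}(k)$ to argue two things. Inputs that already admitted a valid $u'_{2,1}$ need not be re-examined, and the remaining inputs are re-examined by identical code. Reading the right-hand side as Algorithm~1 run at level $k+1$ is the sensible interpretation, since Algorithm~2 uses $h^{\prime,k+1}$, $f^{\prime,k+1}$ and $W^{\prime,k+1}$.

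The gap is your claim that, for $u'_2\notin U'_{2,not}(k)$, the transitions Algorithm~1 adds at level $k+1$ are \emph{precisely} those inherited from $T^{c(k)}$. Your monotonicity lemma gives at best one inclusion: each level-$(k+1)$ successor projects onto one already present at level $k$. Equality needs the converse, and that fails in general. $T^{c(k)}$ was built by quantifying universally over $W^{\prime,k}_1\times W^{\prime,k}_3$, so it contains successors $(x^+_1,x^+_3)$ generated only by particular values of formerly external inputs. At level $k+1$ some of these inputs become $h_{j,i}(x_j)$ for newly included $\Sigma_j$. If the reachable $x_j$ do not realize all those values (e.g.\ $\Sigma_j$ has a single state while $\vert U_{j,i}\vert\geq 2$), Algorithm~1 never creates those successors, whereas Algorithm~2 keeps them.

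Your embedding with free new coordinates only enlarges this discrepancy, because it adds states whose new coordinates are unreachable or incompatible with the new systems' dynamics. Without an embedding, the state spaces $X_1\times\Pi_{i\in[3;N_k]}X_i\times X_q$ and $X_1\times\Pi_{i\in[3;N_{k+1}]}X_i\times X_q$ already differ. What your argument supports is an over-approximation modulo projection, not equality.

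Two further steps also break.

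First, condition (\ref{eqAlg2}) quantifies over the current $X^{c(k+1)}$. In Algorithm~2 this set starts from the inherited states rather than from $X^c_0$, so the ``verbatim'' inner loop runs on a different state set. It may reject a $u'_{2,1}$ that Algorithm~1 accepts, so $h$ and $U'_{2,not}(k+1)$ need not agree.

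Second, the co-accessibility fixed point is not monotone in the transition relation, so your planned commutation lemma fails. Adding $x\xrightarrow{u}z$ with $z$ dead and unmarked removes $x$ when $u$ was its only witness. Conversely, a state pruned from $T^{c(k)}$ can become co-accessible at level $k+1$ once nondeterminism shrinks. Algorithm~2 never revisits inputs outside $U'_{2,not}(k)$, so it cannot recover such states. Idempotence of $\Trim$ does not help either, because the untrimmed systems already differ.

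The paper's own proof takes the same line-by-line route and simply asserts, ``by definition of $U'_{2,not}(k)$'', that both algorithms enforce the same transitions, so it does not close these points either. A provable version would have to be weaker, for instance a simulation-type relation between the two outputs modulo projection of the $x'_3$ component. Alternatively, it would need a modified Algorithm~2 that re-validates inherited transitions against the level-$(k+1)$ dynamics and recomputes the trim on untrimmed data.
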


\begin{proof}
Since outputs of Algorithms \ref{alg1} and \ref{alg2} are derived from transition systems $T^c$ and $T^{c(k+1)}$, respectively, 
the result holds if $T^c$ in line 13 of Algorithm \ref{alg1} and $T^{c(k+1)}$ in line 12 of Algorithm \ref{alg2}, apart from different notation used, coincide. 
All entities defining transition systems $T^c$ and $T^{c(k+1)}$ are updated in line 7 of Algorithm \ref{alg1} and lines 5-6 of Algorithm \ref{alg2}, respectively; note that apart from different notation used for indicating entities in $T^c$ and $T^{c(k+1)}$, lines 7 and 5-6 coincide. 
By definition of set $U'_{2,not}(k)$, for any input $u'_2=(u'_{1,2},u'_{3,2})\in U'_2$ of $C'$ in line 5 of Algorithm \ref{alg1}, there exists an input $u'_{2,1}\in U'_{2,1}$ in line 6 of Algorithm \ref{alg1} enforcing some transitions in specification $Q$, if and only if there exists an input $u'_{2,1}\in U'_{2,1}$ in line 4 of Algorithm \ref{alg2} enforcing the same transitions in $Q$.
Hence, the result follows.
\end{proof}
The following results hold directly from Theorems \ref{thquasincr}, \ref{thBehnonincr}, \ref{lemma2}.
\begin{corollary}
\label{thincr}
For any $k\in\mathbb{N}$, quadruplet  (\ref{quadruplet}) 
 solves Problem \ref{problem1} for the original NoS $\mathcal{N}$, with 
 $(C,X_{2,f},h_{2,1},\mathcal{R})=(C(k),X_{2,f}(k),h^k_{2,1},\mathcal{R}(k))$, 
 where $(C(k),X_{2,f}(k),h^k_{2,1},\mathcal{R}(k))$ satisfies (\ref{incr}).
\end{corollary}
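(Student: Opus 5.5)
The plan is to reduce Corollary \ref{thincr} to Theorem \ref{thquasincr} by showing that the tuples produced by the incremental recursion (\ref{incr}) coincide, for every $k$, with those produced by the non-incremental recursion (\ref{quasincr}). The argument is an induction on $k$, with Theorem \ref{lemma2} as the inductive step.

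For the base case $k=0$, the first line of (\ref{incr}) is literally $\mathbb{A}(\mathcal{N}'(0),Q,\theta)$. This is exactly (\ref{quasincr}) at $k=0$, so the tuple $(C(0),X_{2,f}(0),h^0_{2,1},\mathcal{R}(0),T^{c(0)},U'_{2,not}(0))$ is the same under both recursions.

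For the inductive step, assume the tuples obtained from (\ref{incr}) and (\ref{quasincr}) agree at step $k$. In particular the incremental recursion produces the same $T^{c(k)}$ and $U'_{2,not}(k)$ as $\mathbb{A}(\mathcal{N}'(k),Q,\theta)$. These are precisely the auxiliary arguments fed into $\partial\mathbb{A}$, which is the situation covered by Theorem \ref{lemma2}. Applying that theorem (read with the network index aligned so that the right-hand side is $\mathbb{A}(\mathcal{N}'(k+1),Q,\theta)$) shows that the step-$(k+1)$ tuple from (\ref{incr}) equals the one from (\ref{quasincr}). I expect this index alignment to be the main obstacle. In (\ref{incr}) the operator $\partial\mathbb{A}$ is written with $\mathcal{N}'(k)$, while Algorithm \ref{alg2} actually uses $h^{\prime,k+1}$, $W^{\prime,k+1}_j$ and the new inputs indexed by $[N_k+1;N_{k+1}]$. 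I would make explicit that $\partial\mathbb{A}$ operates on $\mathcal{N}'(k+1)$ and invoke Theorem \ref{lemma2} in that form.

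With the coincidence established for every $k$, Theorem \ref{thquasincr} gives the conclusion directly. Any tuple satisfying (\ref{quasincr}) yields a quadruplet $(C(k),X_{2,f}(k),h^k_{2,1},\mathcal{R}(k))$ solving Problem \ref{problem1} for $\mathcal{N}$. For the sake of the corollary, the relevant parts of Theorem \ref{thquasincr} are that each state trajectory of $\Sigma_{[1;N]}$ starting in $\mathcal{R}_0(k)$ restricts to a trajectory of $\Sigma(\mathcal{N}'(k))$, and that Algorithm \ref{alg1} enforces (\ref{ineq}) irrespective of the external inputs. The remaining components $T^{c(k)}$ and $U'_{2,not}(k)$ are discarded, leaving exactly the quadruplet (\ref{quadruplet}) required by the statement.
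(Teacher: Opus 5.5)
Your proposal is correct and takes essentially the paper's route: the paper derives this corollary directly from Theorem~\ref{lemma2}, which identifies the output of $\partial\mathbb{A}$ with that of $\mathbb{A}$, combined with Theorem~\ref{thquasincr}. Your explicit induction on $k$, and your reading of (\ref{incr}) with $\partial\mathbb{A}$ acting on the data of $\mathcal{N}'(k+1)$ (which is what Algorithm~\ref{alg2} actually uses), only make precise the index bookkeeping that the paper leaves implicit.
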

\begin{corollary}
\label{thincr1}
For any $k\in\mathbb{N}$ and for any quadruplet (\ref{quadruplet}), the following inclusion holds:
\[
\begin{array}{l}
\mathcal{Q}(\Sigma_{[1;N]},C(k+1),X_{2,f}(k+1),h^{k+1}_{2,1},\mathcal{R}_0(k+1))\subseteq \\
\mathcal{Q}(\Sigma_{[1;N]},C(k),X_{2,f}(k),h^k_{2,1},\mathcal{R}_0(k)), 
\end{array}
\]
 where $(C(k),X_{2,f}(k),h^k_{2,1},\mathcal{R}(k))$ and $(C(k+1),X_{2,f}k+1),$ $h^{k+1}_{2,1},\mathcal{R}(k+1))$ satisfy (\ref{incr}), and $\mathcal{R}_0(k)=\mathcal{R}(k) \times X_{3,0} \times ... \times X_{N,0}$. Moreover, there exists $K\in\mathbb{N}$ such that 
$\mathcal{Q}(\Sigma_{[1;N]},C(k),X_{2,f}(k),h^k_{2,1},\mathcal{R}_0(k))=
\mathcal{Q}(\Sigma_{[1;N]},C,X_{2,f},h_{2,1},\mathcal{R}_0)
$, for any $k \geq K$.
\end{corollary}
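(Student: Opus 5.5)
The plan is to obtain the inclusion from an identification of the tuples generated by (\ref{incr}), using Theorem \ref{lemma2}. I would not try to compare the two trajectory sets of $\Sigma_{[1;N]}$ directly. Definition \ref{defPartSpec} shows that $\mathcal{Q}(\Sigma_{[1;N]},C,X_{2,f},h_{2,1},\mathcal{R}_0)$ depends only on the quadruplet $(C,X_{2,f},h_{2,1},\mathcal{R})$, since $\mathcal{R}_0$ is built from $\mathcal{R}$ and the fixed sets $X_{i,0}$. Hence, if the quadruplet at step $k+1$ of (\ref{incr}) coincides with the one at step $k$, the two implemented parts of $Q$ are equal, and the inclusion holds trivially.

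\textbf{Step 1.} Apply Theorem \ref{lemma2} to the second line of (\ref{incr}). Its argument is the pair $(\mathcal{N}'(k),Q,\theta)$ together with the data $T^{c(k)},U'_{2,not}(k)$ produced at the previous step. The theorem gives
\[
(C(k+1),X_{2,f}(k+1),h^{k+1}_{2,1},\mathcal{R}(k+1))=\mathbb{A}(\mathcal{N}'(k),Q,\theta)\big|_{\text{first four entries}} .
\]

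\textbf{Step 2.} Show by induction on $k$ that the step-$k$ tuple of (\ref{incr}) is also generated by $\mathbb{A}(\mathcal{N}'(k),Q,\theta)$.
\begin{itemize}
\item The base case $k=0$ is the first line of (\ref{incr}).
\item For the inductive step, Theorem \ref{lemma2} states that $\partial\mathbb{A}$ only re-examines, starting from $T^{c(k)}$, the inputs in $U'_{2,not}(k)$. The inputs outside $U'_{2,not}(k)$ already have their transitions stored in $T^{c(k)}$. The transition system produced is therefore the $\Trim$ of the same object that Algorithm \ref{alg1} would build.
\end{itemize}
Comparing with Step 1 would then give equality of the quadruplets at $k$ and $k+1$, and the inclusion follows. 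The fixed-point part follows by combining Proposition \ref{prop1} with the second statement of Theorem \ref{thBehnonincr}. For $k\ge K$ we have $\mathcal{N}'(k)=\mathcal{N}'(K)$, so by Step 1 every later tuple equals $\mathbb{A}(\mathcal{N}'(K),Q,\theta)$, which is the full-network solution $(C,X_{2,f},h_{2,1},\mathcal{R})$. Hence (\ref{stat2}) holds for all large $k$.

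\textbf{Main obstacle: the index bookkeeping in Step 2.} Read literally, the second line of (\ref{incr}) feeds $\mathcal{N}'(k)$, not $\mathcal{N}'(k+1)$, into $\partial\mathbb{A}$. Under that reading, Theorem \ref{lemma2} yields only that the step-$(k+1)$ tuple of (\ref{incr}) equals the step-$k$ tuple of (\ref{quasincr}). Then the sets $\mathcal{Q}$ along (\ref{incr}) are a shifted copy of those along (\ref{quasincr}), and Theorem \ref{thBehnonincr} gives monotonicity in the opposite direction.

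So the stated inclusion holds exactly when consecutive tuples coincide, and that is what must be checked. I would first try to show this directly from Algorithm \ref{alg2}. The idea is that, with network argument $\mathcal{N}'(k)$ in both places, the set $\Pi_{i\in[N_k+1;N_{k+1}]}U_{i,2}(k)$ of new input components scanned in line 3 contributes no admissible $u'_{2,1}$ beyond those already recorded in $T^{c(k)}$. Then $T^{c(k+1)}=\Trim(T^{c(k)})=T^{c(k)}$, because $T^{c(k)}$ is already trim. If this fails, the inclusion can only be established at $k=0$, where both the step-$0$ and step-$1$ tuples of (\ref{incr}) equal $\mathbb{A}(\mathcal{N}'(0),Q,\theta)$. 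In that case I would report that the general statement needs the reverse inclusion from Theorem \ref{thBehnonincr}.
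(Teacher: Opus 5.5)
\textbf{The gap: equality of consecutive tuples.} Your whole argument rests on consecutive tuples of (\ref{incr}) coinciding, i.e.\ $T^{c(k+1)}=T^{c(k)}$. That step is neither proved nor true in general.

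Your Steps 1 and 2 produce it only by using two index conventions at once. Step 1 reads the second line of (\ref{incr}) literally, so the step-$(k+1)$ tuple is $\mathbb{A}(\mathcal{N}'(k),Q,\theta)$. Step 2 asserts that the step-$k$ tuple is $\mathbb{A}(\mathcal{N}'(k),Q,\theta)$.
\begin{itemize}
\item Under the literal reading, the inductive step of Step 2 is exactly the claim $\mathbb{A}(\mathcal{N}'(k),Q,\theta)=\mathbb{A}(\mathcal{N}'(k+1),Q,\theta)$, so the induction is circular. Theorem \ref{lemma2} compares $\partial\mathbb{A}$ and $\mathbb{A}$ on the same network and says nothing across different $k$.
\item Under the intended reading, Algorithm \ref{alg2} works with the step-$(k+1)$ data: $h^{\prime,k+1}$, $f^{\prime,k+1}$, $W^{\prime,k+1}$ and the newly measured components $U_{i,2}$, $i\in[N_k+1;N_{k+1}]$. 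Then Step 2 is correct but Step 1 is not, and comparing the two gives no equality.
\end{itemize}
Your fallback asks for the opposite of what Algorithm \ref{alg2} is built to do. Showing that re-scanning $U'_{2,not}(k)$ with the new components yields no new admissible $u'_{2,1}$ denies the very mechanism by which the algorithm enlarges $T^{c(k)}$. If it held for every $k$, every $\mathcal{Q}$ along (\ref{incr}) would equal the one obtained on $\mathcal{N}(0)$. The example of Section \ref{sec8} contradicts this: $\mathcal{N}(0)$ yields no solution, while $\mathcal{N}(1)$ does.

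\textbf{What the paper does.} Its proof is a plain transfer of earlier results.
\begin{itemize}
\item By Theorem \ref{lemma2}, (\ref{incr}) generates the same tuples as (\ref{quasincr}).
\item Theorem \ref{thquasincr} then gives Corollary \ref{thincr}.
\item Theorem \ref{thBehnonincr} gives monotonicity and, through Proposition \ref{prop1}, stabilization. Your treatment of the stabilization part is fine.
\end{itemize}
This argument delivers the inclusion of Theorem \ref{thBehnonincr}:
$\mathcal{Q}(\Sigma_{[1;N]},C(k),\ldots,\mathcal{R}_0(k))\subseteq\mathcal{Q}(\Sigma_{[1;N]},C(k+1),\ldots,\mathcal{R}_0(k+1))$.
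The printed corollary has the two sides swapped relative to the theorem it is said to follow from directly.

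You were right to notice this conflict, but the correct response is to prove the Theorem \ref{thBehnonincr} direction and flag the printed one as a slip. Combined with Theorem \ref{thBehnonincr}, the printed inclusion would force consecutive $\mathcal{Q}$'s to be equal, which the paper's own example rules out. So it should not be the target of a proof, and certainly not via equality of consecutive controllers.

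A smaller point: even granting Theorem \ref{thBehnonincr}, the printed inclusion is equivalent to equality of consecutive $\mathcal{Q}$-sets, not of consecutive tuples. Your ``holds exactly when consecutive tuples coincide'' overstates what is needed.
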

\section{Extension to NoS with multi-objectives}\label{sec6}
In this section we consider multiple plants, controllers and specifications. 
Consider NoS $\mathcal{N}=(\mathcal{V},\mathcal{E},h)$. Among systems in $\mathcal{V}$ we consider a collection of plants $\mathbf{P} =\{\mathbf{P}_1,\mathbf{P}_2,..., \mathbf{P}_{\mathbf{N}}\}\subseteq \mathcal{V}
$ 
and a collection of controllers 
$\mathbf{C} =\{\mathbf{C}_1,\mathbf{C}_2,..., \mathbf{C}_{\mathbf{N}}\} \subseteq \mathcal{V}
$
with $\mathbf{N}\leq N$. System $\mathbf{C}_{\mathbf{i}}$ is the controller associated with plant $\mathbf{P}_{\mathbf{i}}$. 
Sets of states and of initial states of $\mathbf{C}_{\mathbf{i}}$ and $\mathbf{P}_{\mathbf{i}}$
are denoted by $\mathbf{X}^{\mathbf{c}}_{\mathbf{i}}$ and $\mathbf{X}^{\mathbf{p}}_{\mathbf{i}}$, and by 
$\mathbf{X}^{\mathbf{c}}_{\mathbf{i,0}}$ and $\mathbf{X}^{\mathbf{p}}_{\mathbf{i,0}}$, respectively; sets of inputs by $\mathbf{U}^{\mathbf{c}}_{\mathbf{i}}$ and $\mathbf{U}^{\mathbf{p}}_{\mathbf{i}}$, respectively; 
state trajectories by $\mathbf{x}^{\mathbf{c}}_{\mathbf{i}}(.)$ and $\mathbf{x}^{\mathbf{p}}_{\mathbf{i}}(.)$, respectively. 
We suppose that $\mathbf{X}^{\mathbf{p}}_{\mathbf{i}} $ is metric with metric $\mathbf{d}_{\mathbf{i}}: \mathbf{X}^{\mathbf{p}}_{\mathbf{i}} \times \mathbf{X}^{\mathbf{p}}_{\mathbf{i}} \rightarrow \mathbb{R}^+_0 $. In the sequel we make the following
\begin{assumption}
\label{ass4}
Pair $(\mathbf{C}_{\mathbf{i}},\Sigma_j)\in\mathcal{E}$ if and only if $\Sigma_j=\mathbf{P}_{\mathbf{i}}$. 
For any $\Sigma_i\in \mathcal{V}$ and for any $\mathbf{i}\in [1;\mathbf{N}]$, there exists a directed path from $\Sigma_i$ to either $\mathbf{C}_{\mathbf{i}}$ or $\mathbf{P}_{\mathbf{i}}$.
\end{assumption}
Specification associated with $\mathbf{P}_{\mathbf{i}}$ is given as a regular language $\mathbf{Q}_{\mathbf{i}}$ defined over the alphabet $\mathbf{X}^{\mathbf{p}}_{\mathbf{i}}$ where $\mathbf{X}^{\mathbf{p}}_{\mathbf{i}}$ is the set of states of $\mathbf{P}_{\mathbf{i}}$. Let
$\mathbf{S}=\{(\mathbf{P}_{\mathbf{i}},\mathbf{Q}_{\mathbf{i}}), \mathbf{i}\in [1;\mathbf{N}] \}$.
The problem we address in this section naturally extends Problem \ref{problem1} as follows:
\begin{problem}
\label{problem1gen}
Find a set $\mathcal{R}\subseteq \mathbf{X}$ of initial states for the interconnected system $\Sigma_{[1;N]}$, and 
for any $\mathbf{i}\in [1;\mathbf{N}]$, for any accuracy $\mathbf{\theta}_{\mathbf{i}}\in\mathbb{R}^+_0$, find a controller $\mathbf{C}_{\mathbf{i}}$, a set of final states $\mathbf{X}^{\mathbf{c}}_{\mathbf{i},\mathbf{f}}\subseteq \mathbf{X}^{\mathbf{c}}_{\mathbf{i}}$ of $\mathbf{C}_{\mathbf{i}}$, and a connection function $\mathbf{h}_{\mathbf{i}}$ associated with edge 
$(\mathbf{C}_{\mathbf{i}},\mathbf{P}_{\mathbf{i}})\in \mathcal{E}$, 
such that for any state trajectory $\mathbf{x}(.)=(x_1(.),x_2(.),...,$ $x_N(.))$ of $\Sigma_{[1;N]}$
with length $\mathbf{l}+1$, with initial state 
$\mathbf{x}(0)=(x_1(0),x_2(0),...,x_N(0))\in\mathcal{R}$, there exist $\mathbf{l}_\mathbf{i}\in [\mathbf{l};\infty[$ and word $\mathbf{q}^{\mathbf{i}}_0\, \mathbf{q}^{\mathbf{i}}_1\,...\, \mathbf{q}^{\mathbf{i}}_{\mathbf{l}_{\mathbf{i}}} \in \mathbf{Q}_{\mathbf{i}}$ such that 
$\mathbf{x}^{\mathbf{c}}_{\mathbf{i}}(\mathbf{l}_{\mathbf{i}})\in \mathbf{X}^{\mathbf{c}}_{\mathbf{i},\mathbf{f}}$ and the following inequalities hold:
\begin{equation}
    \label{ineqgen}
    \mathbf{d}_{\mathbf{i}}(\mathbf{x}^{\mathbf{p}}_{\mathbf{i}}(t),\mathbf{q}^{\mathbf{i}}_t)\leq \mathbf{\theta}_{\mathbf{i}},\forall t\in[0;\mathbf{l}_\mathbf{i}],  \mathbf{i}\in[1;\mathbf{N}].
\end{equation}
\end{problem}
\medskip
Problem above can be solved with a centralized approach. Indeed, one can define a plant $P$, obtained by interconnecting all plants $\mathbf{P}_{\mathbf{i}}\in\mathbf{P}$, a controller $C$, obtained by interconnecting all controllers $\mathbf{C}_{\mathbf{i}}\in\mathbf{C}$, and a system obtained by interconnecting all
remaining systems in the NoS $\mathcal{N}$, to then apply the results presented in Sections \ref{sec4} and \ref{sec5} to the resulting NoS. The main drawback of this approach is with computational complexity, as discussed in the next section. For this reason, in the following we propose an alternative approach. This approach consists in decomposing Problem \ref{problem1gen} into $\mathbf{N}$ sub-problems which coincide with Problem \ref{problem1} with $C=\mathbf{C}_{\mathbf{i}}$, $P=\mathbf{P}_{\mathbf{i}}$ and $Q=\mathbf{Q}_{\mathbf{i}}$. The solution to Problem \ref{problem1gen} is given by Algorithm \ref{alg3}. It is readily seen that 
Algorithm \ref{alg3} terminates in a finite number of steps. Moreover:
\begin{theorem}
\label{thlast}
Output of Algorithm \ref{alg3} solves Problem \ref{problem1gen}.
\end{theorem}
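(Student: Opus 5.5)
We have not seen Algorithm \ref{alg3}. Presumably it loops over $\mathbf{i}\in[1;\mathbf{N}]$. For each $\mathbf{i}$ it applies the single-plant procedure of Sections \ref{sec4}--\ref{sec5} (operator $\mathbb{A}$, or its incremental version $\partial\mathbb{A}$) to the NoS in which $P=\mathbf{P}_{\mathbf{i}}$, $C=\mathbf{C}_{\mathbf{i}}$, $Q=\mathbf{Q}_{\mathbf{i}}$, $\theta=\theta_{\mathbf{i}}$. All remaining systems, including the other plants and controllers, are lumped into the third block $\Sigma'_3$. The algorithm then returns the controllers $\mathbf{C}_{\mathbf{i}}$, final sets $\mathbf{X}^{\mathbf{c}}_{\mathbf{i},\mathbf{f}}$, connection functions $\mathbf{h}_{\mathbf{i}}$, and an initial set $\mathcal{R}$ formed by intersecting, over $\mathbf{i}$, the cylindrical sets $\mathcal{R}_0^{\mathbf{i}}$ (each $\mathcal{R}^{\mathbf{i}}$ times the free initial sets of the other components).

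\textbf{Reduction to the single-plant case.} For each fixed $\mathbf{i}$, Assumption \ref{ass4} plays the role of Assumption \ref{ass0}, so sub-problem $\mathbf{i}$ is an instance of Problem \ref{problem1}. By Theorem \ref{ThSimplified1}, applied through Theorem \ref{thquasincr} (or Corollary \ref{thincr} for the incremental version), the $\mathbf{i}$-th output solves Problem \ref{problem1} for $\mathcal{N}$. The key observation is that the guarantee in (\ref{eqAlg1}) is universal over external inputs and over all successors of the lumped system. Other controllers $\mathbf{C}_{\mathbf{j}}$, $\mathbf{j}\neq\mathbf{i}$, are unknown when $\mathbf{C}_{\mathbf{i}}$ is designed, so they must be treated as nondeterministic components or external inputs. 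Their actual behaviour is then one particular realization among those already covered. I will argue this as in the proof of Theorem \ref{thBehnonincr}: fixing any trajectory of the full closed loop, its projection is a trajectory of the $\mathbf{i}$-th reduced NoS $\Sigma(\mathcal{N}'(k))$ with some specific external inputs.

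\textbf{Assembling the conclusion.} Take any trajectory $\mathbf{x}(.)$ of $\Sigma_{[1;N]}$ of length $\mathbf{l}+1$ with $\mathbf{x}(0)\in\mathcal{R}$. Then $\mathbf{x}(0)\in\mathcal{R}_0^{\mathbf{i}}$ for every $\mathbf{i}$. By the step-by-step argument of Theorem \ref{ThSimplified1}, inequality (\ref{ineqgen}) holds for $t\in[0;\mathbf{l}]$ with some prefix $\mathbf{q}^{\mathbf{i}}_0\ldots\mathbf{q}^{\mathbf{i}}_{\mathbf{l}}$ generated along $T_{\mathbf{Q}_{\mathbf{i}}}$. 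Because $T^{c}$ is trim, the current controller state is co-accessible. Hence the trajectory can be extended to some $\mathbf{l}_{\mathbf{i}}\geq\mathbf{l}$ at which $\mathbf{x}^{\mathbf{c}}_{\mathbf{i}}(\mathbf{l}_{\mathbf{i}})\in\mathbf{X}^{\mathbf{c}}_{\mathbf{i},\mathbf{f}}$, keeping the accuracy bound at each step, and the completed word lies in $\mathbf{Q}_{\mathbf{i}}$. This is exactly the existential over $\mathbf{l}_{\mathbf{i}}\in[\mathbf{l};\infty[$ in Problem \ref{problem1gen}. Doing this for every $\mathbf{i}$ yields the claim.

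\textbf{Main obstacle.} The delicate step is the coupling among the $\mathbf{N}$ designs. $\mathbf{C}_{\mathbf{i}}$ was synthesized with $\mathbf{C}_{\mathbf{j}}$ not yet fixed, and the closed loop must still be a valid realization for every sub-problem simultaneously. I will handle this by showing that in each sub-problem every other controller and plant enters either as part of $\Sigma'_3$ with full nondeterminism or as a quantified external input. Any concrete choice made by Algorithm \ref{alg3} is then a restriction, and restrictions preserve the universal guarantee. A secondary point is nonemptiness and consistency of $\mathcal{R}$ as an intersection. If $\mathcal{R}$ is empty the statement holds vacuously, so only the cylindrical structure needs care, to ensure that membership in $\mathcal{R}$ implies membership in each $\mathcal{R}_0^{\mathbf{i}}$.
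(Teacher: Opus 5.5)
\textbf{Where your plan matches the paper.} Your decomposition is the paper's. Each sub-problem $(\mathbf{P}_{\mathbf{i}},\mathbf{Q}_{\mathbf{i}})$ is solved with every other controller's output left completely free. Algorithm \ref{alg3} makes this explicit: before the design loop it installs stand-in controllers with $\mathbf{X}^{\mathbf{c}}_{\mathbf{j}}=\mathbf{X}^{\mathbf{c}}_{\mathbf{j},\mathbf{0}}=U_{\phi(\mathbf{C}_{\mathbf{j}}),\phi(\mathbf{P}_{\mathbf{j}})}$ and identity output, or else leaves them outside $\mathcal{N}(k)$. The actual outputs $\mathbf{h}_{\mathbf{j}}(\mathbf{x}^{\mathbf{c}}_{\mathbf{j}}(.))$ are then one realization inside that range, and $\mathcal{R}$ is the intersection of the cylinders over the $\mathcal{R}_{\mathbf{i}}$. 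This restriction argument is what delivers (\ref{ineqgen}) at the instants $t\in[0;\mathbf{l}]$ of the given trajectory. That is also all the paper's sketch argues.

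\textbf{The gap: your step producing $\mathbf{l}_{\mathbf{i}}$.} Trimness of the $T^{c}$ built for sub-problem $\mathbf{i}$ only says that in the \emph{design model}, where the other controllers are free, some continuation reaches a marked state. The labels of $T^{c}$ are observations, not choices of $\mathbf{C}_{\mathbf{i}}$. Moreover, reaching $\mathbf{X}^{\mathbf{c}}_{\mathbf{i},\mathbf{f}}$ is an existential property, so ``restrictions preserve the guarantee'' runs the wrong way here: fixing the $\mathbf{C}_{\mathbf{j}}$ can delete exactly the continuations that lead to marked states.

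\textbf{Counterexample.} Set up the two pairs as follows:
\begin{itemize}
\item take $\theta_1=\theta_2=0$ with discrete metrics;
\item let $\mathbf{P}_2$ (initial state $b$) copy the output of $\mathbf{C}_2$;
\item let $\mathbf{P}_1$ (initial state $w$) move to $g$ if $\mathbf{P}_2$ is in $a$, and to $w$ otherwise;
\item let $\mathbf{Q}_1$ be the words over $\{w,g\}$ that start with $w$ and end with $g$, and $\mathbf{Q}_2$ the nonempty words over $\{b\}$;
\item add observation edges $\mathbf{P}_2\to\mathbf{C}_1$ and $\mathbf{P}_1\to\mathbf{C}_2$ so that Assumption \ref{ass4} holds.
\end{itemize}
With $\mathbf{C}_2$'s output free, the trimmed $T^{c}$ for $\mathbf{P}_1$ is nonempty, and so are $\mathcal{R}_1$, $\mathcal{R}_2$ and hence $\mathcal{R}$. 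The reason is that the environment may still drive $\mathbf{P}_2$ to $a$. Sub-problem $2$, however, forces $\mathbf{C}_2$ to output $b$ forever. In the assembled loop $\mathbf{P}_1$ therefore stays at $w$, and no $\mathbf{l}_1$ with a word of $\mathbf{Q}_1$ within $\theta_1$ exists.

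\textbf{What is missing.} Under your reading of the $\mathbf{l}_{\mathbf{i}}$ clause, as an actual extension of the closed-loop trajectory, this step cannot follow from trimness plus restriction. It needs an extra ingredient, for example a reachability condition on $T^{c}$ that holds robustly against every admissible behaviour of the other controllers and uncontrolled systems. The paper's sketch does not supply such an ingredient either.
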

\begin{proof}
(Sketch.) 
For solving Problem \ref{problem1gen}, we associate a sequence $\{\mathcal{N}(k)\}_{k\in\mathbb{N}}$ to each pair $(\mathbf{C}_{\mathbf{i}},\mathbf{P}_{\mathbf{i}})$  by setting $\Sigma_1=\mathbf{P}_{\mathbf{i}}$ and $\Sigma_2=\mathbf{C}_{\mathbf{i}}$. 
For any $k\in\mathbb{N}$ and $\mathbf{i}\in [1;\mathbf{N}]$, depending on the topology of the NoS $\mathcal{N}(k)$, input $\mathbf{h}_{\mathbf{j}}(\mathbf{x}^{\mathbf{c}}_{\mathbf{j}}(.))$ with $\mathbf{j} \neq \mathbf{i}$ may be external to $P'=\mathbf{P}_{\mathbf{i}}=\Sigma_1(k)$ or not. Algorithms \ref{alg1} and \ref{alg2} provide solution to sub-problems $(\mathbf{P}_{\mathbf{i}},\mathbf{Q}_{\mathbf{i}})$ in a way such that plant $P'=\mathbf{P}_{\mathbf{i}}=\Sigma_1(k)$ satisfies the specification $Q'=\mathbf{Q}_{\mathbf{i}}$, irrespective of evolution of external inputs of $\mathcal{N}(k)$, possibly including $\mathbf{h}_{\mathbf{j}}(\mathbf{x}^{\mathbf{c}}_{\mathbf{j}}(.))$, $\mathbf{j} \neq \mathbf{i}$, which ranges arbitrarily in the whole set $U_{\phi(\mathbf{C}_\mathbf{i}),\phi(\mathbf{P}_\mathbf{i})}$. When considering NoS $\mathcal{N}(k)$ with controllers $\mathbf{C}_{\mathbf{i}}$ and connection functions $\mathbf{h}_{\mathbf{i}}$, starting from any initial state $\mathbf{x}(0)=(x_1(0),x_2(0),...,x_N(0))\in \mathcal{R}$,
the corresponding state trajectory $\mathbf{x}^{\mathbf{p}}_{\mathbf{i}}(.)$ of $\mathbf{P}_{\mathbf{i}}$ satisfies (\ref{ineqgen}) because each possible external input $\mathbf{h}_{\mathbf{j}}(\mathbf{x}^{\mathbf{c}}_{\mathbf{j}}(.))$ of $\Sigma'_1=\mathbf{P}_{\mathbf{i}}$ in $\mathcal{N}(k)$ ranges in a (not necessarily proper) subset of $U_{\phi(\mathbf{C}_\mathbf{i}),\phi(\mathbf{P}_\mathbf{i})}$. 
Hence, the result follows.
\end{proof}
\incmargin{1em}
\restylealgo{boxed}\linesnumbered
\begin{algorithm}[t]
\label{alg3}
\SetLine
\textbf{input:}\,
NoS $\mathcal{N}(k)$; set $\mathbf{S}$; Accuracy $\theta_{\mathbf{i}}$\;
define 
$\phi: \mathbf{C} \cup \mathbf{P} \rightarrow [1;\mathbf{N}]$ by $\phi(\mathbf{C}_\mathbf{i})=i$ if $\mathbf{C}_\mathbf{i}=\Sigma_i$ and $\phi(\mathbf{P}_\mathbf{i})=i$ if $\mathbf{P}_\mathbf{i}=\Sigma_i$\;
\ForEach {$\mathbf{i} \in [1;\mathbf{N}]$}{
define 
\[
\begin{array}
{l}
\mathbf{C}_{\mathbf{i}} : 
\left\{
\begin{array}
{l}
\mathbf{x}^{\mathbf{c}}_{\mathbf{i}}(t)\in \mathbf{X}^{\mathbf{c}}_{\mathbf{i,0}}=\mathbf{X}^{\mathbf{c}}_{\mathbf{i}}=
U_{\phi(\mathbf{C}_\mathbf{i}),\phi(\mathbf{P}_\mathbf{i})}
,\\
\mathbf{u}^{\mathbf{c}}_{\mathbf{i}}(t)=
\mathbf{x}^{\mathbf{c}}_{\mathbf{i}}(t),
\end{array}
\right.
\\
\mathbf{h}_{\mathbf{i}}(\mathbf{x}^{\mathbf{c}}_{\mathbf{i}}(t))=\mathbf{x}^{\mathbf{c}}_{\mathbf{i}}(t), 
t\in\mathbb{N},
\end{array}
\]
}
update NoS $\mathcal{N}(k)$ with the collection of controllers $\mathbf{C}_{\mathbf{i}}$ and connection functions $\mathbf{h}_{\mathbf{i}}$\;
\ForEach {$\mathbf{i} \in [1;\mathbf{N}]$}{
find solution $(\mathbf{C}_{\mathbf{i}},\mathbf{X}^{\mathbf{c}}_{\mathbf{i},\mathbf{f}},\mathbf{h}_{\mathbf{i}},\mathcal{R}_{\mathbf{i}})$
to Problem \ref{problem1} for NoS $\mathcal{N}(k)$ with inputs $P=\mathbf{P}_{\mathbf{i}}$, $Q=\mathbf{Q}_{\mathbf{i}}$ and $\theta=\mathbf{\theta}_{\mathbf{i}}$ }
\textbf{output:} $(\mathbf{C}_{\mathbf{i}},\mathbf{X}^{\mathbf{c}}_{\mathbf{i},\mathbf{f}},\mathbf{h}_{\mathbf{i}})$, $\mathbf{i}\in [1;\mathbf{N}]$;
$\mathcal{R}=\{\mathbf{x}\in\mathbf{X}\,|\,(\mathbf{x}^{\mathbf{p}}_{i},\mathbf{x}^{\mathbf{c}}_{i})\in \mathcal{R}_\mathbf{i},\, \mathbf{i} \in [1;\mathbf{N}]\}$.
\caption{Control design of NoS with multi-objectives.}
\end{algorithm}

\section{Time Computational Complexity Analysis}\label{sec7}
We start with the following preliminary result.
\begin{proposition}
\label{propTCC1}
Time Computational Complexity (TCC) of Algorithm \ref{alg1} is 
\begin{equation}
    \label{StatTCC1}
O(\vert U'_2 \vert\,
\vert U'_{2,1} \vert \,
\vert W_1 \vert \,
\vert W_3 \vert \,
2^{2\vert X'_1 \vert} \,
2^{2\vert X'_3 \vert} \,
\vert 
\xrightarrow[\text{ }\text{ }q\text{ }\text{ }]{\text{}}
\vert ).
\end{equation}
\end{proposition}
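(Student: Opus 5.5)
The plan is to count the work performed by each nested loop of Algorithm \ref{alg1} and multiply the resulting bounds. The initialization (lines 3--4) scans $X'_{1,0}\times X'_{3,0}\times X_{q,0}$ and is dominated by the main loops, so it can be ignored. The trimming in line 13 is linear in the size of $T^c$, and the output construction in lines 14--17 is linear in the number of stored states and transitions. Both are therefore also dominated by the cost of the loop body.

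\textbf{Loop structure.} The outer loop of line 5 ranges over $U'_2$ and contributes the factor $\vert U'_2\vert$. The loop of line 6 ranges over $U'_{2,1}$ and contributes $\vert U'_{2,1}\vert$.

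\textbf{Cost of checking condition (\ref{eqAlg1}).} For fixed $(u'_2,u'_{2,1})$ the check is the main quantity to bound. It quantifies universally over:
\begin{itemize}
\item the states $(x'_1,x'_3,x_q)$ of $X^c$ lying in the preimage of $(u'_{1,2},u'_{3,2})$;
\item the pairs $(w_1,w_3)\in W_1\times W_3$, contributing $\vert W_1\vert\,\vert W_3\vert$;
\item the successors $x^+_1\in f'_1(\cdot)\subseteq X'_1$ and $x^+_3\in f'_3(\cdot)\subseteq X'_3$;
\end{itemize}
and then existentially over transitions $x_q\to x_q^+$ of $T_{Q'}$, contributing $\vert\xrightarrow[\text{ }\text{ }q\text{ }\text{ }]{\text{}}\vert$.

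\textbf{Expected obstacle.} The hard part is justifying the factors $2^{2\vert X'_1\vert}$ and $2^{2\vert X'_3\vert}$. I would obtain them as follows. Because $\Sigma'_1$ and $\Sigma'_3$ are nondeterministic, the controller state must track which plant and environment states are consistent with the output information $h'_{1,2}$ and $h'_{3,2}$. In the worst case the relevant objects are a current set and a successor set in $2^{X'_1}$, and likewise in $2^{X'_3}$. The pair (current set, successor set) ranges over $2^{X'_1}\times 2^{X'_1}$, giving $2^{2\vert X'_1\vert}$, and similarly $2^{2\vert X'_3\vert}$ for $\Sigma'_3$.

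Concretely, the preimage intersection with $X^c$ and the enumeration of $f'_1$-successors, viewed as subsets of $X'_1$, are both bounded by $2^{\vert X'_1\vert}$. The same reasoning for $X'_3$ yields the other factor. The factors from $X_q$ are absorbed into the transition count, since each $x_q$ is visited only through an explicit transition of $T_{Q'}$.

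\textbf{Conclusion.} The assignments in line 7 and the update of $U'_{2,not}$ in lines 8--9 take constant time per successful check, so they add no further factor. Multiplying all factors gives (\ref{StatTCC1}). I expect this to be an upper bound that is deliberately coarse, consistent with the $O(\cdot)$ notation, rather than a tight count.
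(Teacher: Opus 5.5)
Your argument is correct and essentially the paper's. Both multiply the ranges of the nested loops in Algorithm~\ref{alg1}, bound the $X'_1$- and $X'_3$-dependent factors coarsely by $2^{2\vert X'_1\vert}$ and $2^{2\vert X'_3\vert}$, and absorb the $X_q$ factor into $\vert\xrightarrow[\text{ }\text{ }q\text{ }\text{ }]{\text{}}\vert$; the paper writes $\vert X'_1\vert\,2^{\vert X'_1\vert}\leq 2^{2\vert X'_1\vert}$ where you bound both the current-state and the successor enumerations by $2^{\vert X'_1\vert}$.

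Your belief-set motivation does not describe Algorithm~\ref{alg1}, which enumerates individual triples $(x'_1,x'_3,x_q)$, but this is harmless for an upper bound. Your amortization of each $x_q$ against its outgoing transitions is actually a sounder justification than the paper's bare appeal to $\vert X_q\vert\leq\vert\xrightarrow[\text{ }\text{ }q\text{ }\text{ }]{\text{}}\vert$, which applied literally to its product would give $\vert\xrightarrow[\text{ }\text{ }q\text{ }\text{ }]{\text{}}\vert^2$.
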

\begin{proof}
By lines 10-11 of Algorithm \ref{alg1}, TCC is given by 
$O(
\vert U'_2  \vert 
\vert U'_{2,1}  \vert 
\vert X'_1  \vert 
\vert X'_3  \vert 
\vert X_q  \vert 
\vert W_1  \vert 
\vert W_3 \vert 
2^{\vert X'_1 \vert}2^{\vert X'_3 \vert}
\vert 
\xrightarrow[\text{ }\text{ }q\text{ }\text{ }]{\text{}}\vert)$. Since for $\alpha\in\mathbb{N}$, $\alpha2^{\alpha}\leq 2^{2\alpha}$ 
and $\vert X_q \vert \leq \vert \xrightarrow[\text{ }\text{ }q\text{ }\text{ }]{\text{}} \vert$, the result follows.
\end{proof}
\medskip
In the sequel we evaluate TCC of the main results of the paper w.r.t. $N$, $N_k$ and $N_q=\vert \xrightarrow[\text{ }\text{ }q\text{ }\text{ }]{\text{}} \vert$. The next result evaluates TCC of the incremental approach without past information.
\begin{proposition}
\label{propTCC3}
For any $k\in\mathbb{N}$, TCC of Theorem \ref{thquasincr} is 
\begin{equation}
    \label{StatTCC3}
O\left(N_q\,2^{2^{N_k}+2N-N_k}
\right).
\end{equation}
\end{proposition}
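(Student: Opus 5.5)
The plan is to apply Proposition \ref{propTCC1} to the three-vertex NoS $\mathcal{N}'(k)$ of Definition \ref{defN''(k)}, since by (\ref{quasincr}) the tuple in Theorem \ref{thquasincr} is $\mathbb{A}(\mathcal{N}'(k),Q,\theta)$. The cost of Theorem \ref{thquasincr} is then (\ref{StatTCC1}) with each factor replaced by its $\mathcal{N}'(k)$ counterpart from Definition \ref{defsigma'jk}. The remaining work is to bound every factor in terms of $N$, $N_k$ and $N_q$. Following the convention implicit in the statement, all per-system sets $X_i$, $U_{j,i}$ are treated as having cardinality bounded by a constant. I normalize this constant to $2$, so that each factor becomes a power of two; a general constant only changes the base.

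First I bound the factors associated with the plant, the controller and the external inputs. $X'^{,k}_1=X_1$, so $2^{2\vert X'_1\vert}=O(1)$. $U'^{,k}_2=U^k_2$ is a product of at most $N_k-1$ internal input sets $U_{j,2}$, which gives $O(2^{N_k})$. $U'_{2,1}$ is a single set, so it contributes $O(1)$. $W'^{,k}_1$ and $W'^{,k}_3$ are products of input sets indexed by systems outside $\mathcal{V}(k)$, of which there are $N-N_k$. Each of $\vert W_1\vert$ and $\vert W_3\vert$ is therefore $O(2^{N-N_k})$, and their product is $O(2^{2(N-N_k)})$. Finally, $\vert\xrightarrow[\text{ }\text{ }q\text{ }\text{ }]{}\vert=N_q$.

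Second I bound the term coming from $\Sigma'_3(k)$. Its state set is $X'^{,k}_3=\Pi_{i\in[3;N_k]}X_i$, so $\vert X'_3\vert=O(2^{N_k-2})$. Hence $2^{2\vert X'_3\vert}\le 2^{2\cdot 2^{N_k-2}}\le 2^{2^{N_k}}$. This doubly-exponential term is the source of the $2^{2^{N_k}}$ in (\ref{StatTCC3}).

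Multiplying the pieces gives
\[
O\left(N_q\,2^{2^{N_k}}\,2^{N_k}\,2^{2(N-N_k)}\right)=O\left(N_q\,2^{2^{N_k}+2N-N_k}\right),
\]
which is (\ref{StatTCC3}).

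The main obstacle is the bookkeeping of exponents in the linear part, which must come out exactly as $2N-N_k$. This requires the counting to be precise in two places. The controller's input set $U'_2$ must contribute about $N_k$ in the exponent, and each of $W_1$, $W_3$ must contribute about $N-N_k$. I would justify the latter by noting that the external systems of $\Sigma'_1(k)$ and $\Sigma'_3(k)$ are subsets of $\mathcal{V}\backslash\mathcal{V}(k)$. Small additive constants, such as the $-1$ in $N_k-1$ or the $-2$ in $N_k-2$, are absorbed into the $O(\cdot)$.
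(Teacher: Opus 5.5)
Your proposal is correct and takes essentially the same route as the paper. You instantiate Proposition~\ref{propTCC1} on $\mathcal{N}'(k)$ and bound the factors as follows: $\vert U'_2\vert\sim 2^{N_k}$, $\vert W_1\vert,\vert W_3\vert\sim 2^{N-N_k}$, $\vert U'_{2,1}\vert$ and $2^{2\vert X'_1\vert}$ by constants, and the transition factor by $N_q$. Your explicit normalization $\vert X_i\vert\le 2$ gives $2^{2\vert X'_3\vert}\le 2^{2^{N_k-1}}$, which is slightly more careful than the paper: its estimate $\vert X'_3\vert\sim 2^{N_k}$ would literally produce $2^{2\cdot 2^{N_k}}$ rather than the $2^{2^{N_k}}$ appearing in (\ref{StatTCC3}).
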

\begin{proof}
By (\ref{StatTCC1}), we get $
U'_2=U_2^k,
U'_{2,1}=U_{2,1},
X'_1=X_1,
X'_3=\pi_{i\in [3;N_k]}X_i,
W_1=W_3=\pi_{i\in [N_k+1;N]}U_i$. 
By (\ref{ui}) we get $O(\vert U_2^k \vert)\sim 2^{N_k}$. Since $U_{2,1},X_1,2^{X_1}$ are independent from $N$, $N_k$ and $N_q$, we get $O(\vert U_{2,1} \vert) \sim O(\vert X_1 \vert) \sim O(\vert 2^{X_1} \vert) \sim 1$. Moreover, $O(\vert X'_3 \vert)\sim 2^{N_k}$, 
because of definition of $X^{\prime,k}_3$ in Definition \ref{defsigma'jk}, corresponding to $X'_3$ in Algorithm \ref{alg1}. We also get $O(\vert W_1 \vert ) \sim O(\vert W_3 \vert )\sim 2^{N- N_k}$, 
because of definition of $W^{\prime,k}_1$ and $W^{\prime,k}_3$  in Definition \ref{defsigma'jk}, corresponding to $W_1$ and $W_3$, respectively, in Algorithm \ref{alg1}.
Hence, since $O(\vert X_q \vert)\leq O(\vert \xrightarrow[\text{ }\text{ }q\text{ }\text{ }]{\text{}} \vert )$ holds by definition of $\mathcal{N}$, by Proposition \ref{propTCC1}, 
(\ref{StatTCC1}) rewrites as (\ref{StatTCC3}). 
\end{proof}
The next result evaluates TCC of the incremental approach with past information.
\begin{proposition}
\label{propTCC4}
For any $k\in\mathbb{N}$, TCC of Corollary \ref{thincr} is 
\begin{equation}
    \label{StatTCC4}
O\left(N_q\,
2^{2N}(2^{2^{N_k}-N_k}-
2^{2^{N_{k-1}}-N_{k-1}})
\right).
\end{equation}
\end{proposition}
\begin{proof}
First of all note that $N_k\geq N_{k-1}$ from which $2^{2^{N_k}+N}-2^{2^{N_{k-1}}+N}\geq 0$. From Proposition \ref{propTCC3}, TCC in computing $T^{c(k)}$ is given by (\ref{StatTCC3}). Since Algorithm \ref{alg2} used in Corollary \ref{thincr} computes $T^{c(k+1)}$ given $T^{c(k)}$, TCC involved in Algorithm \ref{alg2} at step $k$ is given by (\ref{StatTCC4}).
\end{proof}
Some final remarks follow. 
TCC for solving Problem 1 in a centralized setting and by using whichever algorithm, is double-exponential in $N$ 
because the number of (needed to-be-explored) transitions of nondeterministic system $\Sigma_{[1;N]}$, composed of $N$ subsystems, scales as $2^{2^N}$.
TCC of Propositions \ref{propTCC3} and \ref{propTCC4} is exponential with $N$ and double-exponential with $N_k$. When $N$ is arbitrarily large and $N_k<<N$, the gain in terms of TCC w.r.t. centralized approaches is evident. When $N_k \sim N$, meaning that NoS $\mathcal{N}(k)$ is approaching closer and closer to NoS $\mathcal{N}$ (see Proposition \ref{prop1}), there may be no substantial gain in the approach we propose. Nevertheless, our approach is possibly able to find a solution for sufficiently small $k$, whereas centralized approaches may fail due to possible TCC burden.
TCC of Proposition \ref{propTCC4} is lower than TCC of Proposition \ref{propTCC3} because Corollary \ref{thincr} at step $k$ makes use of information about transition system $T^{c(k-1)}$ computed at step $k-1$, whereas Theorem \ref{thquasincr} does not. When multiple plants and controllers are considered, TCC for solving Problem \ref{problem1gen}  is $\mathbf{N}$ times TCC for solving Problem \ref{problem1}. Moreover, Algorithm \ref{alg3} allows using parallel computing architectures. 
When these computing architectures are used, resulting TCC for solving Problem \ref{problem1gen} coincides with TCC for solving Problem \ref{problem1}, which is important for scalability purposes. 

\section{Example}\label{sec8}
We consider the problem of regulating the temperature in a circular building composed of $\mathbf{N}\geq 3$ rooms, each one equipped with a heater. 
Each room $\mathbf{i}$ is adjacent to rooms $\mathbf{i}-1$ and $\mathbf{i}+1$; room $\mathbf{N}+1$ corresponds to room $1$. 
Dynamics of temperature $x_{\mathbf{i}}$ of each room $\mathbf{i}$ is the same and described by $\mathbf{P}_{\mathbf{i}}$ as in (\ref{ith-plant}), where 
$X_{\mathbf{i},0}=\{19\}$, $X_{\mathbf{i}}=[15;22]$, $V_{\mathbf{i}}=\{0,1,2\}$,
$U_{\mathbf{i}}=V_{\mathbf{i}}\times X_{\mathbf{i}-1} \times X_{\mathbf{i}+1}$, $x_{\mathbf{i}}(t)\in X_{\mathbf{i}}$ is the state and $v_\mathbf{i}(t)\in V_{\mathbf{i}}$ is the control input at time $t\in [0;5]$. 
Here, each time interval from $t$ to $t+1$ has four hours duration. Hence, temperatures in the rooms are monitored and controlled in a full day. 
Map $f_\mathbf{i}$ is given by $f_\mathbf{i}(x_\mathbf{i},u_\mathbf{i})=\{x_\mathbf{i}+\phi_\mathbf{i}(v_\mathbf{i},\Delta_\mathbf{i})\}$, where $u_\mathbf{i}=(v_\mathbf{i},x_{\mathbf{i}-1},x_{\mathbf{i}+1})$, $\Delta_\mathbf{i}=(x_{\mathbf{i}+1}-x_\mathbf{i})+(x_{\mathbf{i}-1}-x_\mathbf{i})$ and $\phi_\mathbf{i}(v_\mathbf{i},\Delta_\mathbf{i})$ is detailed in Table \ref{table1}.
\begin{table}[]
    \centering
\begin{eqnarray}
\begin{tabular}
     {|r|r|r|}
     \hline
$v_\mathbf{i}$ & $\Delta_\mathbf{i}$ & $\phi_\mathbf{i}$  \\     
\hline    
$0$ & $-4$ & $-2$   \\
$0$ & $-3$ & $-2$   \\
$0$ & $-2$ & $-1$   \\
$0$ & $-1$ & $-1$  \\
$0$ & $0$  & $-1$  \\
$0$ & $1$  & $0$  \\
$0$ & $2$  & $0$  \\
$0$ & $3$  & $0$  \\
$0$ & $4$  & $1$  \\
\hline
\end{tabular}
\notag &
\begin{tabular}
     {|r|r|r|}
     \hline
$v_\mathbf{i}$ & $\Delta_\mathbf{i}$ & $\phi_\mathbf{i}$ \\     \hline    
$1$ & $-4$ & $-2$   \\
$1$ & $-3$ & $-1$   \\
$1$ & $-2$ & $-1$  \\
 $1$ & $-1$ &  $0$ \\
 $1$ & $0$ &  $0$ \\
 $1$ & $1$ &  $0$   \\
 $1$ & $2$ & $1$    \\
$1$ & $3$ & $1$   \\
$1$ & $4$ & $2$   \\
\hline
\end{tabular}
\notag &
\begin{tabular}
     {|r|r|r|}
     \hline
$v_\mathbf{i}$ & $\Delta_\mathbf{i}$ & $\phi_\mathbf{i}$ \\    
\hline    
$2$ & $-4$ & $-1$   \\
$2$ & $-3$ & $-1$   \\
$2$ & $-2$ & $0$  \\
$2$ & $-1$ &  $1$ \\
$2$ & $0$ &  $1$ \\
$2$ & $1$ &  $2$   \\
$2$ & $2$ & $2$    \\
$2$ & $3$ & $3$   \\
$2$ & $4$ & $3$   \\
\hline
\end{tabular}
  \end{eqnarray}
    \caption{Function $\phi_\mathbf{i}(v_\mathbf{i},\Delta_\mathbf{i})$.}
    \label{table1}
\end{table}
For $\mathbf{i}\in [1;\mathbf{N}]$, controller $\mathbf{C}_{\mathbf{i}}$ takes as input, temperature of rooms $\mathbf{i}$, $\mathbf{i}-1$ and $\mathbf{i}+1$, and outputs control action $v_{\mathbf{i}}$ that is applied to $\mathbf{P}_{\mathbf{i}}$. 
Specification for room $1$ is given by the sequence $19, 21, 18, 17, 19, 20$, 
while specification for other rooms is to maintain temperature constant at $19$ Celsius degrees. 
We set $\theta_{\mathbf{i}}=1.5$, $\mathbf{i}\in [1;\mathbf{N}]$. Resulting NoS is given by $\mathcal{N}=(\mathcal{V},\mathcal{E},h)$, where:  
$\mathcal{V}=\{\Sigma_1,\Sigma_2,...,\Sigma_{\mathbf{N}},\Sigma_{\mathbf{N}+1},\Sigma_{\mathbf{N}+2},..., \Sigma_{N}\}$ with $\Sigma_i=\mathbf{P}_{\mathbf{i}}$, $\Sigma_{i+\mathbf{N}}=\mathbf{C}_{\mathbf{i}}$ for $i\in [1;\mathbf{N}]$, and $N=2\mathbf{N}$;
pair $(\Sigma_{\mathbf{i}},\Sigma_{\mathbf{j}})\in\mathcal{E}$ if 
$\mathbf{j}=\mathbf{i}+1$, for $\mathbf{i}\in  [1;\mathbf{N}-1]$, or
$\mathbf{j}=1$, for $\mathbf{i}=\mathbf{N}$, or 
$\mathbf{i}\in \{\mathbf{j}-\mathbf{N}, \mathbf{j}-\mathbf{N}-1,\mathbf{j}-\mathbf{N}+1\}$, for $\mathbf{j}\in [\mathbf{N}+1;N]$;
$h_{\mathbf{j},\mathbf{i}}:X_\mathbf{j}\rightarrow U_{\mathbf{j},\mathbf{i}}$ is defined for any $(\Sigma_\mathbf{j},\Sigma_\mathbf{i}) \in \mathcal{E}$  by $h_{\mathbf{j},\mathbf{i}}(x_\mathbf{j})=x_\mathbf{j}$, $x_\mathbf{j} \in X_\mathbf{j}$. By the symmetry of the geometric configuration of the rooms in the circular building and by the specifications considered, it is readily seen that instead of solving Problem \ref{problem1} for each $\Sigma_\mathbf{i}=\mathbf{P}_{\mathbf{i}}$, $\mathbf{i}\in [1;\mathbf{N}]$, we only need to consider three cases: $\mathbf{i}=1$, $\mathbf{i}\in [2;\mathbf{N}]$, 
$\mathbf{i}\in [3;\mathbf{N}-1]$.
We start by defining for each $\mathbf{i}\in [1;\mathbf{N}]$, the sequence of sub-NoS $\{\mathcal{N}(k)\}_{k\in\mathbb{N}}$ as in (\ref{NdiK}), resulting in: NoS $\mathcal{N}(0)=(\mathcal{V}_0,\mathcal{E}_0,h^0)$ is given by  $\mathcal{V}_0=\mathcal{V}
    $, $\mathcal{E}_0=\bigcup_{\mathbf{i}\in [1;\mathbf{N}]}\{(\Sigma_\mathbf{i},\Sigma_{\mathbf{i}+\mathbf{N}}),$ $(\Sigma_{\mathbf{i}+\mathbf{N}},\Sigma_\mathbf{i})\}$ and $h^0$ is derived accordingly;
    NoS $\mathcal{N}(1)=\mathcal{N}$. We start solving Problem \ref{problem1} by using NoS $\mathcal{N}(0)$. Consider room 2 and control sequence 
$v_2(.)=2,\,1,\,0,\,1,\,1$.
Such $v_2(.)$ allows enforcing desired temperature in room 2 up to accuracy $\theta_{2}$, irrespective of control sequences $v_{\mathbf{i}}(.)$ chosen for rooms $\mathbf{i}=1,3,4,...,\mathbf{N}$ as we now discuss. 
Dynamics of $x_2(.)$ is influenced only by $x_1(.)$ and $x_3(.)$.
By choosing $v_{\mathbf{i}}(t)=0$, $\mathbf{i}=1,3$, $t \in [0;5]$, resulting temperature is 
$x_2(.)=19,\,20,\,19,\,19,\,19,\,19$, hence specification for room 2 is met.  
By choosing $v_{\mathbf{i}}(t)=2$, $\mathbf{i}=1,3$, $t \in [0;5]$, we get 
$x_2(.)=19,\,20,\,20,\,20,\,20,\,20$, hence 
specification for room 2 is met.
By monotonicity of $x_2(.)$ with respect to $v_1(.)$ and $v_3(.)$, i.e. if $v_1(.)$ and $v_3(.)$ decrease/increase then $x_2(.)$ decreases/increases, for any choice of control sequences $v_1(.)$ and $v_2(.)$ (both lower bounded by $0$ and upper bounded by $2$),    
control sequence $v_2(.)$ allows enforcing desired temperature in room 2 up to accuracy $\theta_{2}$. 
On the other hand, given control sequence $v_2(.)$ for room 2, no control sequence $v_1(.)$ exists for enforcing desired specification on room 1. As a consequence, no solution is found for Problem \ref{problem1}, by using NoS $\mathcal{N}(0)$. 
For NoS $\mathcal{N}(1)$, sequences $v_\mathbf{i}(.)$ solving Problem \ref{problem1} are found and detailed in Table \ref{table2} that reports also the corresponding temperatures $x_{\mathbf{i}}$.
\begin{table}[]
    \centering
\begin{tabular}
     {|l||l|l||l |l || l | l ||}
\hline    
$t$ & $v_1$ & $x_1 $  & $v_2=$  & $x_2= $  & $v_{\mathbf{i}},$ & $x_{\mathbf{i}}$\\
& & & $v_{\mathbf{N}}$ & $x_{\mathbf{N}}$ & $\mathbf{i}\in [3;\mathbf{N}-1]$  & 
\\
\hline 
$0$ & $2$ & $19$  & $2$ & $19$ & $1$ & $19$\\
$1$ & $2$ & $20$  & $1$ & $19$ & $1$ & $19$\\
$2$ & $0$ & $19$  & $1$ & $20$ & $1$ & $19$\\
$3$ & $0$ & $18$  & $1$ & $20$ & $1$ & $19$\\
$4$ & $2$ & $18$  & $1$ & $20$ & $1$ & $19$\\
$5$ &  & $20$  &  & $19$ &  & $19$\\
\hline
\end{tabular}  
    \caption{Sequences $v_{\mathbf{i}}(.)$ and $x_{\mathbf{i}}(.)$, $\mathbf{i}\in [1;\mathbf{N}]$.}
    \label{table2}
\end{table}
We stress that while a decentralized control architecture,  corresponding to $\mathcal{N}(0)$, fails to find a solution, the architecture of NoS, corresponding to $\mathcal{N}(1)$,  finds a solution. It is readily seen that for $\mathbf{N}$ sufficiently large, a centralized approach fails to find a solution due to corresponding TCC burden and inherent bounded computational resources of any computing units. 
\section{Conclusions and Outlook}\label{sec9}
We introduced a novel control architecture, terms Network of Systems, which extends decentralized control architectures, and propose an incremental approach to design local controllers enforcing on each plant a regular language local specification, up to desired accuracies. 
Future work will focus on development of software tools implementing proposed algorithms and, on decentralized control architectures with an incremental design approach, as introduced in this paper.

\bigskip

\bibliographystyle{plain}
\bibliography{biblionew}

	\end{document}